\documentclass[preprint]{article}

\usepackage{setspace}
\usepackage{fullpage}
\usepackage{datetime}

\usepackage{latexsym} 

\usepackage[utf8]{inputenc}
\usepackage[english]{babel}

\usepackage{amsfonts,amsmath,amssymb,amsbsy,amsthm,enumerate}

\usepackage[mathscr]{euscript}
\usepackage{pstricks}
\usepackage{multirow}
\usepackage{verbatim}
\usepackage{color}
\usepackage{comment} 




%

\newtheorem{theorem}{Theorem}[section]

\newtheorem{conjecture}[theorem]{Conjecture}

\newtheorem{lemma}[theorem]{Lemma}
\newtheorem{corollary}[theorem]{Corollary}

\newtheorem{fact}[theorem]{Fact}

\newtheorem{definition}[theorem]{Definition}

\usepackage{tikz}
\usepackage{xifthen}	
\usepackage{subcaption} 
\captionsetup[subfigure]{labelfont=rm}
\usetikzlibrary{calc,positioning,decorations.pathmorphing,decorations.pathreplacing}
\usepackage{./figstyle}

\newcommand\D{\mathcal{D}}

\newcommand\B{\mathcal{B}}
\newcommand\prehang{\text{Hang}}

\usepackage{graphicx}

\bibliographystyle{plain}

\title{Decomposing 8-regular graphs into paths of length 4}

\author{F. Botler 
\thanks{This research has been partially supported by CNPq
  Projects (Proc. 477203/2012-4 and {456792/2014-7}), Fapesp Project
  (Proc. 2013/03447-6). 
  F. Botler is supported by Fapesp (Proc. 2014/01460-8 and
  2011/08033-0) and CAPES (Proc. 1617829).}
\and A. Talon}

\date{{\small Instituto de Matem\'atica e Estat\'istica} \\
	{\small Universidade de S\~ao Paulo}\\
	{\small ENS Lyon}\\
	\vspace{.4cm}\today\vspace{-.3cm}}

\begin{document}

\maketitle

\begin{abstract}
	A \(T\)-decomposition of a graph \(G\) is a set of 
	edge-disjoint copies of \(T\) in \(G\) that cover the
	edge set of \(G\).
	Graham and H\"aggkvist (1989) conjectured that
	any \(2\ell\)-regular graph \(G\) admits a \(T\)-decomposition
	if \(T\) is a tree with \(\ell\) edges.
	Kouider and Lonc (1999) conjectured that, in the special case where \(T\)
	is the path with \(\ell\) edges,
	\(G\) admits a \(T\)-decomposition \(\D\)
	where every vertex of \(G\) is the end-vertex of exactly
	two paths of \(\D\),
	and proved that this statement holds when \(G\) has girth 
	at least \((\ell+3)/2\).
	In this paper we verify Kouider and Lonc's Conjecture
	for paths of length \(4\).
	
	\textbf{Keywords:} Decomposition, regular graph, path
\end{abstract}

\let\thefootnote\relax\footnote{E-mails: fbotler@ime.usp.br (F. Botler), 
				alexandre.talon@ens-lyon.org (A. Talon).}

\section{Introduction}

A decomposition of a graph \(G\) is a set \(\D\) of edge-disjoint
subgraphs of \(G\) that cover the edge set of \(G\).
Given a graph \(H\), 
we say that \(\D\) is an \(H\)-decomposition of \(G\) 
if every element of \(\D\) is isomorphic to \(H\).
Ringel~\cite{Ringel64} conjectured that the complete graph \(K_{2\ell+1}\) 
admits a \(T\)-decomposition 
for any tree \(T\) with \(\ell\) edges.
Ringel's Conjecture is commonly confused with the \emph{Graceful Tree Conjecture}
that says that any tree \(T\) on \(n\) vertices admits 
a labeling \(f\colon V(T)\to \{0,\ldots,n-1\}\)
such that \(\{1,\ldots,n-1\}\subseteq \{|f(x)-f(y)|\colon xy\in E(T)\}\).
Since the Graceful Tree Conjecture implies Ringel's Conjecture~\cite{Rosa67},
Ringel's Conjecture holds for many classes of trees 
such as stars, paths, bistars, carterpillars, and lobsters (see~\cite{EdHo06,Golomb72}).
H\"aggkvist~\cite{Ha89} generalized Ringel's Conjecture for regular graphs
as follows.

\begin{conjecture}[Graham--H\"aggkvist, 1989]\label{conj:haggkvist}
	Let \(T\) be a tree with \(\ell\) edges.
	If \(G\) is a \(2\ell\)-regular graph, 
	then~\(G\) admits a \(T\)-decomposition
\end{conjecture}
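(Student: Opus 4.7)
The plan is to attack Conjecture~\ref{conj:haggkvist} via a rooted-tree embedding strategy combined with Petersen's 2-factorization theorem. Since \(G\) is \(2\ell\)-regular, \(G\) decomposes into edge-disjoint spanning 2-factors \(F_1,\ldots,F_\ell\), and consistently orienting the cycles of each \(F_i\) turns \(G\) into a digraph in which every vertex has in-degree and out-degree exactly \(\ell\). As \(|E(G)| = \ell\,|V(G)|\) and \(|E(T)| = \ell\), the desired decomposition must consist of exactly \(n := |V(G)|\) pairwise edge-disjoint copies of \(T\), so it is natural to aim for one copy \(T^v\) rooted at each vertex \(v \in V(G)\).

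To organise the construction, fix a root \(r\) of \(T\) and a nested sequence of rooted subtrees \(\{r\} = T_0 \subset T_1 \subset \cdots \subset T_\ell = T\) in which \(T_j\) is obtained from \(T_{j-1}\) by adjoining a new leaf edge \(e_j = p_j q_j\) with \(p_j \in V(T_{j-1})\). The aim is to build, inductively in \(j\), embeddings \(\varphi^v_j\colon T_j \hookrightarrow G\) with \(\varphi^v_j(r) = v\) such that the copies \(\{\varphi^v_j(T_j) : v \in V(G)\}\) are pairwise edge-disjoint and together cover precisely \(E(F_1) \cup \cdots \cup E(F_j)\). Advancing from stage \(j\) to stage \(j+1\) then amounts to picking, for every vertex \(v\), a fresh out-edge in \(F_{j+1}\) leaving \(\varphi^v_j(p_{j+1})\) to serve as the image of \(e_{j+1}\), in such a way that each edge of \(F_{j+1}\) is used by exactly one \(v\).

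The extension step is naturally a Hall-type matching problem: form the auxiliary bipartite graph with parts \(V(G)\) and \(E(F_{j+1})\), joining \(v\) to the unique edge of \(F_{j+1}\) leaving \(\varphi^v_j(p_{j+1})\); a perfect matching here delivers the required extension. The cleanest way to force such a matching to exist is to maintain throughout the induction the invariant that the \emph{pointer map} \(v \mapsto \varphi^v_j(p_{j+1})\) is a bijection \(V(G) \to V(G)\), since then each vertex of \(G\) is the tail of exactly one required out-edge at stage \(j+1\) and the matching is automatic.

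The hard part will be preserving this bijectivity invariant as \(j\) grows. The pointer map at stage \(j+1\) is a composition of the pointer map at stage \(j\) with the map that traces the unique path in \(T\) from \(p_{j+1}\) to \(p_{j+2}\), together with the extension choices already made, and in general there is no reason for this composite to remain a bijection. The main obstacle is therefore to design the extensions --- or a suitable Hall-type relaxation of bijectivity --- strong enough to survive every step yet flexible enough to admit one; a successful implementation would likely require either a probabilistic argument (showing that a uniformly random matching preserves the invariant with positive probability, for instance via the Lovász Local Lemma) or a structural argument exploiting additional features of \(T\) or \(G\).
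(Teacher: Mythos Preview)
First, note that Conjecture~\ref{conj:haggkvist} is stated in the paper as an \emph{open conjecture}; the paper does not claim to prove it in general, only the special case \(T=P_4\) of the related Conjecture~\ref{conj:koulonc}. So a complete proof is not what you should be comparing against.

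That said, your plan has a concrete gap, and you have misidentified where it lies. The bijectivity invariant you worry about is actually automatic: with the deterministic rule ``use the unique out-edge of \(F_{j+1}\) at the pointer'', one checks by induction that for every \(p\in V(T_j)\) the map \(v\mapsto\varphi^v_j(p)\) is a fixed composition of the successor permutations \(\sigma_i\) of the oriented \(2\)-factors along the \(r\)--\(p\) path in \(T\), hence itself a permutation of \(V(G)\). The matching step is therefore trivial, and the family \(\{\varphi^v_\ell(T):v\in V(G)\}\) genuinely partitions \(E(G)\) into \(n\) closed walks of length~\(\ell\). What fails is that these walks need not be \emph{trees}: nothing prevents \(\varphi^v_{j+1}(q_{j+1})\) from coinciding with \(\varphi^v_j(p)\) for some earlier \(p\in V(T_j)\), in which case the ``copy'' at \(v\) acquires a cycle. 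This injectivity-of-each-embedding obstacle is exactly what H\"aggkvist's girth hypothesis (girth at least the diameter of \(T\)) eliminates, and it is the whole difficulty of the conjecture. The paper's entire technical content, even for the single tree \(P_4\), is a repair mechanism---good orientations, trapped subgraphs, exceptional pairs, and iterated edge swaps---that converts the degenerate copies back into paths; no analogous mechanism is known for general \(T\), and neither a Hall relaxation nor the Local Lemma is known to suffice.
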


H\"aggkvist~\cite{Ha89} also proved Conjecture~\ref{conj:haggkvist} 
when \(G\) has girth at least the diameter of \(T\).
For more results on decompositions of regular graphs into trees, 
see~\cite{Er15+,Fi90,JaTrTu91,JaKoWe13+}.
For the case where \(T=P_\ell\) is the path with \(\ell\) edges
(note that this notation is not standard), 
Kouider and Lonc~\cite{KoLo99} improved H\"aggkvist's result
proving that \textsl{if \(G\) is a \(2\ell\)-regular graph with girth \(g\geq (\ell+3)/2\),
then~\(G\) admits a \emph{balanced} \(P_\ell\)-decomposition~\(\D\)}, that
is a path decomposition \(\D\) where each vertex is the end-vertex
of exactly two paths of \(\D\). 
These authors also stated the following strengthening of Conjecture~\ref{conj:haggkvist}
for paths.
\begin{conjecture}[Kouider--Lonc, 1999]\label{conj:koulonc}
	Let \(\ell\) be a positive integer.
	If \(G\) is a \(2\ell\)-regular graph,
	then \(G\) admits a balanced \(P_\ell\)-decomposition.
\end{conjecture}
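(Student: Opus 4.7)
The plan is to combine induction on $\ell$ with Petersen's 2-factor theorem and to reformulate the conjecture as a statement about transition systems at each vertex. The base case $\ell = 1$ is immediate: a 2-regular graph is a disjoint union of cycles, and its edges form a $P_1$-decomposition in which each vertex is the endpoint of exactly two paths.

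For the inductive step, I would begin by applying Petersen's theorem to decompose $G$ into $\ell$ edge-disjoint 2-factors $F_1, \ldots, F_\ell$, each a disjoint union of cycles. A balanced $P_\ell$-decomposition of $G$ corresponds, at every vertex $v$, to a choice of two of the $2\ell$ incident edges as the path-ends at $v$, together with a perfect matching on the remaining $2\ell - 2$ edges specifying which pairs of edges are traversed consecutively by the same path. Such a transition system globally decomposes $E(G)$ into edge-disjoint walks, and the conjecture reduces to producing a transition system whose walks all have length exactly $\ell$ and are simple (i.e. no vertex is visited twice).

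The attack I would attempt is then to start from a natural transition system — for instance, the one that follows each 2-factor cycle and selects the path-ends uniformly across the 2-factors — and to apply local switches at single vertices that swap two transitions, preserving the balanced vertex count while modifying the resulting walks. One hopes to define a potential function counting violations (closed walks, plus walks whose length differs from $\ell$) which strictly decreases under suitably chosen switches, in the spirit of the augmenting arguments used by Kouider and Lonc in the high-girth setting.

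The main obstacle, and the reason the conjecture remains open in full generality, is that local switches interact globally: a switch that corrects one walk can merge two correct paths into an incorrect one, or force a path to revisit a vertex along a short cycle. The high-girth hypothesis in Kouider and Lonc's partial result is precisely what rules out such short obstructions. Without a girth assumption, triangles, $4$-cycles, and similar small configurations must be handled by delicate surgery, as the present paper does for $\ell = 4$. Designing an invariant, or a more global combinatorial argument, that tames these short-cycle interactions uniformly in $\ell$ is the principal difficulty a proof of the full conjecture would have to overcome.
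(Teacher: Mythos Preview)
This statement is a \emph{conjecture}, not a theorem the paper proves. The paper establishes only the special case $\ell=4$, via a long sequence of lemmas (trapped subgraphs, good orientations, exceptional extensions, complete decompositions) tailored specifically to length-$4$ trails; it makes no claim about general $\ell$.

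Your proposal is likewise not a proof, and you say so yourself: you sketch a transition-system approach with local switches and a potential function, and then correctly identify that the conjecture remains open because short cycles obstruct the switching argument. So there is no proof on either side to compare. Your description of the difficulty is accurate and in fact matches what the paper does for $\ell=4$: the machinery of Sections~\ref{section:extensions} and~\ref{section:complete} is exactly the kind of ``delicate surgery'' on triangles and $4$-cycles you allude to, and the paper's concluding remarks express the same hope that such techniques might extend to larger $\ell$. There is no gap to flag beyond the one you already flagged.
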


One of the authors~\cite{BoMoOsWa15+reg} proved the following weakening of Conjecture~\ref{conj:koulonc}:
for every positive integers~\(\ell\) and~\(g\) such that \(g\geq 3\),
there exists an integer \(m_0 = m_0(\ell,g)\) such that, 
if \(G\) is a \(2m\ell\)-regular graph with~\(m\geq m_0\),
then~\(G\) admits a \(P_\ell\)-decomposition \(\D\) such that every vertex of \(G\)
is the end-vertex of exactly~\(2m\) paths of~\(\D\).
In this paper we prove Conjecture~\ref{conj:koulonc} in the case \(\ell=4\).\\

\subsection{Notation}

A \emph{trail} \(T\) is a graph for which there is a sequence \(B = x_0\cdots x_\ell\)
of its vertices such that \(E(T) = \{x_ix_{i+1}\colon 0\leq i \leq \ell-1\}\)
and \(x_ix_{i+1} \neq x_jx_{j+1}\), for every \(i\neq j\).
Such a sequence~\(B\) of vertices is called a \emph{tracking} of \(T\)
and we say that \(T\) is the trail \emph{induced} by the tracking~\(B\).
We say that the vertices \(x_0\) and \(x_\ell\) are the \emph{final vertices} of \(B\).
Given a tracking \(B=x_0\cdots x_\ell\) we denote by \(B^-\) the tracking \(x_\ell\cdots x_0\).
By abuse of notation, 
we denote by~\(V(B)\) and~\(E(B)\) the sets \(\{x_0,\ldots,x_\ell\}\) of vertices,
and \(\{x_ix_{i+1}\colon 0\leq i\leq \ell-1\}\) of edges of~\(B\), respectively.
Moreover, we denote by \(\bar B\) the trail \(\big(V(B),E(B)\big)\),
and by \emph{length} of \(B\) we mean the length of \(\bar B\).
We also use \emph{\(\ell\)-tracking} to denote a tracking of length \(\ell\).
A set of edge-disjoint trackings \(\B\) of a graph \(G\) is a \emph{tracking decomposition} of \(G\)
if \(\cup_{B\in \B} E(B) = E(G)\).
If every tracking of \(\B\) has length \(\ell\), we say that \(\B\)
is an \emph{\(\ell\)-tracking decomposition},
and if every tracking of \(\B\) induces a path, 
we say that \(\B\) is a \emph{path tracking decomposition}.
For ease of notation, in this work we make no distinction between the trackings~\(B\) and~\(B^-\)
in the following sense.
Suppose \(B\in\B\) is a tracking of a trail~\(T\);
when we need to choose a tracking of~\(T\) we choose between~\(B\) and~\(B^-\) conveniently.

An \emph{orientation} \(O\) of a subset \(E'\) of edges of \(G\) 
is an attribution of a direction (from one vertex to the other)
to each edge of \(E'\).
If an edge \(xy\) is directed from \(x\) to \(y\) in~$O$,
we say that \(xy\) \emph{leaves} \(x\) and \emph{enters} \(y\).
Given a vertex \(v\) of~\(G\), 
we denote by \(d^+_O(v)\) \big(resp.~\(d^-_O(v)\)\big) 
the number of edges leaving (resp. entering) \(v\)
with respect to \(O\).
We say that \(O\) is \emph{Eulerian} if \(d^+_O(v) = d^-_O(v)\)
for every vertex \(v\) of \(G\).
We also denote by \(O^-\), called \emph{reverse orientation}, 
the orientation of \(E'\)
such that if \(xy\in E'\) is directed from \(x\) to \(y\) in \(O\),
then~\(xy\) is directed from \(y\) to \(x\) in \(O^-\).

Suppose that every tracking in \(\B\) has length at least~\(2\).
We consider an orientation \(O\) of a set of edges of \(G\) as follows.
For each tracking \(B = x_0\cdots x_\ell\) in \(\B\),
we orient \(x_0x_1\) from \(x_1\) to \(x_0\),
and \(x_{\ell-1}x_\ell\) from \(x_{\ell-1}\) to \(x_\ell\).
Given a vertex \(v\) of \(G\), we denote by \(\B(v)\) the number of edges 
of \(G\) directed towards \(v\) in \(O\) (i.e., \(\B(v) = d_O^-(v)\))
and by \(\prehang(v,\B)\) the number of edges leaving \(v\) in~\(O\) \big(i.e., \(\prehang(v,\B) = d_O^+(v)\)\big).
We say that an edge that leaves~\(v\) in~\(O\) 
is a \emph{hanging} edge at \(v\)
(this definition coincides with the definition of \emph{pre-hanging} edge in~\cite{BoMoOsWa15+thomassen}).
We say that a tracking decomposition \(\B\) of \(G\) is \emph{balanced}
if \(\B(u) = \B(v)\) for every \(u,v\in V(G)\).
It is clear that if \(\B\) is a balanced path tracking decomposition of \(G\), 
then \(\bar \B\) is a balanced path decomposition of \(G\).

We say that a subgraph \(F\) of a graph \(G\) is a \emph{factor} of~\(G\) if \(V(F) = V(G)\).
If a factor~\(F\) is \(r\)-regular, we say that \(F\) is an \emph{\(r\)-factor}.
Also, we say that a decomposition~\(\mathcal{F}\) of \(G\) is an \emph{\(r\)-factorization}
if every element of \(\mathcal{F}\) is an \(r\)-factor.

\subsection{Overview of the proof}

Let \(G\) be an \(8\)-regular graph.
In Section~\ref{section:extensions} we use Petersen's \(2\)-factorization theorem to obtain 
a \(4\)-factorization \(\{F_1,F_2\}\) of \(G\).
Then, we prove that \(F_1\) admits a balanced \(P_2\)-decomposition \(\D\).
Given an Eulerian orientation \(O\) to the edges of~\(F_2\),
we \emph{extend} each path \(P\) of \(\D\) 
to a trail of length \(4\) using one outgoing edge of~\(F_2\) at each end-vertex of \(P\)
(see Figure~\ref{extensions}),
thus obtaining a \(4\)-tracking decomposition~\(\B\) of~\(G\).
We also prove that these extensions can be chosen such that no element of \(\B\) is a cycle of length 4.
Lemma~\ref{lemma:good-orientation} shows that \(O\) can be chosen with some additional 
properties, which we call \emph{good orientation} (see Definition~\ref{def:good-orientation}),
and Lemma~\ref{lemma:path+exceptional} uses this special properties
to show that the elements of \(\B\) that do not induce paths
can be paired with paths of \(\B\)
to form a new special element, which we call \emph{exceptional extension}
(see Figure~\ref{fig:exceptionals}).
Thus, we can understand \(\B\) as a decomposition into paths and exceptional extensions.
In Section~\ref{section:complete}, we show how to switch edges between the elements
to obtain a decomposition into paths.

\section{Decompositions into extensions}\label{section:extensions}

In this section we use Petersen's Factorization Theorem~\cite{Pe1891} to obtain a well-structured 
tracking decomposition of \(8\)-regular graphs, called \emph{exceptional decomposition into extensions}.

\begin{theorem}[Petersen's $2$-Factorization Theorem]\label{thm:Pet}
	Every \(2k\)-regular graph admits a \(2\)-factorization.
\end{theorem}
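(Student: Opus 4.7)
The plan is to prove Petersen's theorem by induction on $k$. The base case $k=1$ is immediate, since a $2$-regular graph is itself a $2$-factor. For the inductive step, it suffices to exhibit a single $2$-factor $F$ in an arbitrary $2k$-regular graph $G$ with $k\geq 2$: removing $F$ leaves a $(2k-2)$-regular graph, and the inductive hypothesis yields a $2$-factorization of $G\setminus E(F)$, which together with $F$ gives the desired $2$-factorization of $G$.

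To produce a $2$-factor, I would first reduce to the case where $G$ is connected, by treating each component separately. Since every vertex of $G$ has even degree, $G$ admits an Eulerian circuit; orienting each edge in the direction it is traversed yields an Eulerian orientation $O$ with $d^+_O(v)=d^-_O(v)=k$ for every vertex $v$.

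Next, I would build an auxiliary bipartite graph $H$ with parts $V^+=\{v^+:v\in V(G)\}$ and $V^-=\{v^-:v\in V(G)\}$, placing an edge $u^+v^-$ in $H$ for each edge of $G$ oriented from $u$ to $v$ under $O$. The Eulerian condition on $O$ guarantees that every vertex of $H$ has degree exactly $k$, so $H$ is a $k$-regular bipartite graph. By K\"onig's edge-coloring theorem, $H$ decomposes into $k$ perfect matchings $M_1,\ldots,M_k$. Let $F_i$ be the subgraph of $G$ whose edges correspond to the edges of $M_i$. At any vertex $v$, the matching $M_i$ covers $v^+$ (contributing one edge leaving $v$ in $O$) and $v^-$ (contributing one edge entering $v$ in $O$); these correspond to two distinct edges of $G$, so $F_i$ is $2$-regular, i.e., a $2$-factor. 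Any one of the $F_i$ serves as the $2$-factor needed for the inductive step.

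The main delicacy lies in the translation between $H$ and $G$: one must verify that the two edges selected by $M_i$ at $v^+$ and at $v^-$ correspond to distinct edges of $G$, which holds because the Eulerian circuit traverses each edge exactly once and therefore orients it in only one direction, so each edge of $G$ produces a single edge of $H$. Beyond this small bookkeeping point, the argument is routine; no genuine obstacle is expected.
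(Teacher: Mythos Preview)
Your argument is the standard proof of Petersen's theorem and is correct. One small remark: once you have the $k$ perfect matchings $M_1,\ldots,M_k$ of $H$, the corresponding subgraphs $F_1,\ldots,F_k$ already form a $2$-factorization of $G$, so the induction is not strictly necessary; but setting it up inductively does no harm.

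As for comparison with the paper: there is nothing to compare. The paper does not prove Theorem~\ref{thm:Pet}; it simply states it and cites Petersen~\cite{Pe1891} as a classical result, then uses it as a black box to obtain the $4$-factorization $\{F_1,F_2\}$ of an $8$-regular graph. So your proposal supplies a proof where the paper supplies only a reference.
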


Let \(G\) be an \(8\)-regular graph and let \(\mathcal{F}\) be a \(2\)-factorization of \(G\)
given by Theorem~\ref{thm:Pet}.
By combining the elements of \(\mathcal{F}\) we obtain a decomposition of \(G\)
into two \(4\)-factors, say \(F_1\) and \(F_2\).
From now on, we fix such two \(4\)-factors $F_1$ and $F_2$. 
In the figures throughout the paper, we color the edges of $F_1$ with red,
and the edges of $F_2$ with black.
We first prove the following straightforward lemma.

\begin{lemma}\label{lemma:4-reg->2-path}
	If \(G\) is a \(4\)-regular graph, 
	then \(G\) admits a balanced \(P_2\)-decomposition.
\end{lemma}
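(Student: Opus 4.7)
The plan is to use an Eulerian orientation of \(G\). Since \(G\) is \(4\)-regular, every vertex has even degree, so \(G\) is Eulerian and therefore admits an orientation \(O\) in which \(d^+_O(v) = d^-_O(v) = 2\) for every \(v \in V(G)\). I will use this orientation to define the decomposition combinatorially, without appeal to any factorization.

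The key observation is that an Eulerian orientation with in-degree~\(2\) at every vertex naturally packages the edges into paths of length~\(2\) centered at each vertex. For every \(v \in V(G)\), let \(uv\) and \(wv\) be the two edges entering \(v\) in \(O\), and define \(P_v\) to be the path \(uvw\) (length \(2\), with \(v\) as its middle vertex). Set \(\D = \{P_v : v \in V(G)\}\). To see that \(\D\) is a \(P_2\)-decomposition, I would argue that each edge \(xy\) of \(G\), oriented (say) from \(x\) to \(y\) in \(O\), belongs to \(P_y\) and to no other \(P_v\), because \(xy\) enters only the vertex \(y\).

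To verify that \(\D\) is balanced, I would count, for a fixed vertex \(v\), the number of paths in \(\D\) that have \(v\) as an end-vertex. The vertex \(v\) is the middle of \(P_v\), so the paths with \(v\) as an end are exactly the paths \(P_w\) such that \(vw\) is an edge leaving \(v\) in \(O\). Since \(d^+_O(v) = 2\) and (in a simple graph) the two out-neighbours of \(v\) are distinct vertices, this yields exactly two distinct paths of \(\D\) with \(v\) as an end-vertex, as required.

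There is no real obstacle here: the argument is essentially a one-line reduction to the existence of an Eulerian orientation. The only mildly delicate point is ensuring that the two paths \(P_w\) witnessing \(v\) as an end-vertex are distinct, which follows from simpleness of \(G\); this is the only place where simpleness of \(G\) plays a role, and it is why the lemma is described as ``straightforward''.
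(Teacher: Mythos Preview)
Your proof is correct and essentially the same as the paper's: both take an Eulerian orientation and let \(P_v\) consist of the two edges incident to \(v\) in one direction, the only difference being that you use the two \emph{in}-edges at \(v\) while the paper uses the two \emph{out}-edges (which is the same construction applied to the reverse orientation). One tiny remark: the place where simplicity is actually needed is to guarantee that each \(P_v\) is a genuine path (its two endpoints are distinct), rather than to guarantee that the two paths witnessing \(v\) as an end-vertex are distinct.
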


\begin{proof}
Let \(G\) be a \(4\)-regular graph and 
fix an Eulerian orientation \(O\) of its edges.
For each vertex \(v\) of \(G\),
let \(P_v\) be the path consisting of the two edges of \(G\) that leave~\(v\) in \(O\).
The set \(\big\{P_v\colon v\in V(G)\big\}\) is a balanced \(P_2\)-decomposition of~\(G\).
\end{proof}

Now, let \(\D_1\) be a balanced \(P_2\)-decomposition of \(F_1\),
\(O\) be an orientation of the edges of \(F_2\),
and \(B = x_0x_1x_2x_3x_4\) be a \(4\)-tracking in \(G\).
We say that \(B\) is a \emph{\((\D_1,O)\)-extension} if \(x_1x_2x_3 \in \D_1\),
\(x_1x_0\) is directed from~\(x_1\) to~\(x_0\),
and \(x_3x_4\) is directed from~\(x_3\) to~\(x_4\).
We note that if \(T\) is a \((\D_1,O)\)-extension,
then exactly one of the following holds: 
(a) \(T\) is a path of length \(4\);
(b) \(T\) contains a triangle;
(c) \(T\) is a cycle of length \(4\)
(see Figure~\ref{extensions}).
We say that a tracking decomposition \(\B\) of \(G\)
is a \emph{decomposition into \((\D_1,O)\)-extensions} 
if every element of \(\B\) is a \((\D_1,O)\)-extension.
We omit \(\D_1\) and \(O\) when it is clear from the context. 
The next result shows that every \(8\)-regular graph 
admits a decomposition into extensions with no cycles.
We denote by \(\tau(\B)\) the number of elements of \(\B\)
that are cycles of length~\(4\).

\begin{figure}[h]\centering
	\begin{subfigure}[b]{.33\linewidth}
	\centering\scalebox{1}{\begin{tikzpicture}[scale = 1]


\node (0) [black vertex] at (-1,2) {};
\node (1) [black vertex] at (-1,1) {};
\node (2) [black vertex] at (0,0) {};
\node (3) [black vertex] at (1,1) {};
\node (4) [black vertex] at (1,2) {};

\draw [line width=1.3pt,color=red,->] (2) -- (1);
\draw [line width=1.3pt,color=red,->] (2) -- (3);
\draw [line width=1.3pt,color=black,->] (1) -- (0);
\draw [line width=1.3pt,color=black,->] (3) -- (4);

\end{tikzpicture}}
	\caption{}\label{fig:extension-a}
	\end{subfigure}%
	\begin{subfigure}[b]{.33\linewidth}
	\centering\scalebox{1}{\begin{tikzpicture}[scale = 1]


\node (0) [black vertex] at (-1,2) {};
\node (1) [black vertex] at (-1,1) {};
\node (2) [black vertex] at (0,0) {};
\node (3) [black vertex] at (1,1) {};
\node (4) [black vertex] at (-1,1) {};

\draw [line width=1.3pt,color=red,->] (2) -- (1);
\draw [line width=1.3pt,color=red,->] (2) -- (3);
\draw [line width=1.3pt,color=black,->] (1) -- (0);
\draw [line width=1.3pt,color=black,->] (3) -- (4);

\end{tikzpicture}}
	\caption{}\label{fig:extension-b}
	\end{subfigure}%
	\begin{subfigure}[b]{.33\linewidth}
	\centering\scalebox{1}{\begin{tikzpicture}[scale = 1]


\node (0) [black vertex] at (0,2) {};
\node (1) [black vertex] at (-1,1) {};
\node (2) [black vertex] at (0,0) {};
\node (3) [black vertex] at (1,1) {};
\node (4) [black vertex] at (0,2) {};

\draw [line width=1.3pt,color=red,->] (2) -- (1);
\draw [line width=1.3pt,color=red,->] (2) -- (3);
\draw [line width=1.3pt,color=black,->] (1) -- (0);
\draw [line width=1.3pt,color=black,->] (3) -- (4);

\end{tikzpicture}}
	\caption{}\label{fig:extension-c}
	\end{subfigure}%
	\caption{Extensions}
\label{extensions}
\end{figure}
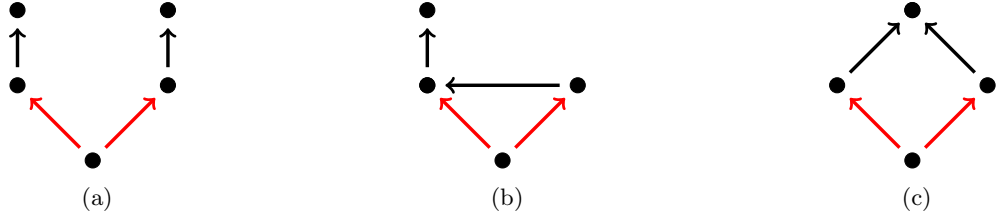

\begin{lemma}\label{lemma:8-reg->extensions}
	Let \(G\) be an \(8\)-regular graph,
	\(F\) be a \(4\)-factor of \(G\),
	\(\D\) be a balanced \(P_2\)-decomposition of~\(F\),
	and \(O\) be an Eulerian orientation of the edges of \hbox{\(G-E(F)\).}
	Then~\(G\) admits a decomposition into \((\D,O)\)-extensions
	with no cycles.
\end{lemma}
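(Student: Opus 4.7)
The plan is an extremal-and-local-swap argument. First I would observe that some decomposition $\B$ into $(\D,O)$-extensions always exists: at each vertex $v$, the balanced hypothesis gives exactly two path-endpoints of $\D$ at $v$, while the Eulerian orientation $O$ on the $4$-regular graph $F_2 := G - E(F)$ gives exactly two edges leaving $v$ in $O$; pairing these off arbitrarily at each vertex produces $\B$, whose elements collectively cover all $4|V(G)|$ edges of $G$ without overlap. I would then choose $\B$ so as to minimize $\tau(\B)$ and argue, towards a contradiction, that $\tau(\B) \geq 1$.

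Fix a cycle extension $C = q p_0 p_1 p_2 q \in \B$, where $P = p_0 p_1 p_2 \in \D$ and the edges $p_0 q, p_2 q$ are both directed towards $q$ in $O$. Let $P' = p_0 p'_1 p'_2$ be the second path of $\D$ incident to $p_0$, let $p_0 q'$ be the out-edge used at $p_0$ to extend $P'$ in $\B$ (so $q' \neq q$ is the other out-neighbor of $p_0$), and let $p'_2 q''$ be the out-edge used at $p'_2$. The main operation is the \emph{swap at $p_0$}: exchange, between $P$ and $P'$, which out-edge of $p_0$ extends which path. The result is a new decomposition $\B^*$ into $(\D,O)$-extensions that differs from $\B$ only on the extensions of $P$ and $P'$, so to conclude $\tau(\B^*) < \tau(\B)$ it suffices to show that neither of the two new extensions is a $4$-cycle.

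The new extension of $P$ is $q' p_0 p_1 p_2 q$, which is not a cycle because $q' \neq q$. The new extension of $P'$ is $q p_0 p'_1 p'_2 q''$, which is a cycle iff $q'' = q$. Here the crucial point is that since $O$ is Eulerian on the $4$-regular graph $F_2$, the vertex $q$ has $d^-_O(q) = 2$, and the two in-edges $p_0 q, p_2 q$ already account for both. Therefore every vertex with an out-edge to $q$ lies in $\{p_0, p_2\}$, so $q'' = q$ would force $p'_2 \in \{p_0, p_2\}$; since $p'_2 \neq p_0$ (endpoints of $P'$ are distinct), the only remaining possibility is $p'_2 = p_2$. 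But then $P$ and $P'$ are exactly the two paths of $\D$ incident to $p_2$, and so must use the two distinct out-edges at $p_2$; as $P$ already uses $p_2 q$, the extension of $P'$ at $p_2$ must use the other out-edge of $p_2$, forcing $q'' \neq q$, a contradiction.

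I expect the degenerate sub-case $p'_2 = p_2$ to be the main obstacle. Ruling it out is precisely where both hypotheses of the lemma are used simultaneously: the Eulerian hypothesis pins down the two in-edges at $q$, and the balanced hypothesis pins down the two paths of $\D$ at $p_2$; only together do they forbid the swap from creating a secondary cycle. Everything else is routine, and since $\tau(\B)$ is a nonnegative integer that strictly decreases with each swap, the procedure terminates at a decomposition into $(\D,O)$-extensions with no cycles.
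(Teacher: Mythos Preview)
Your proposal is correct and follows essentially the same approach as the paper: take a decomposition into $(\D,O)$-extensions minimizing $\tau(\B)$, pick a $4$-cycle, and swap the two out-edges at one of its vertices with the other $\D$-path ending there. The only cosmetic difference is in ruling out the secondary cycle: the paper observes directly that $q''=q$ would give $d_O^-(q)\geq 3$ (the three in-edges being distinct by edge-disjointness of $\B$), whereas you argue contrapositively that the putative third in-edge must be one of the two already present, and then dispose of the coincidence $p_2'=p_2$ explicitly.
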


\begin{proof}
	Let \(G\), \(\D\), and \(O\) be as in the statement,
	and put \(H = G-E(F)\).
	First, we prove that \(G\) admits a decomposition
	into \((\D,O)\)-extensions.
	Indeed, since \(\D\) is balanced, \(\D(v) = 2 = d^+_O(v)\) 
	for every vertex \(v\) of \(G\).
	Thus, we can extend every path \(P=x_1x_2x_3\) in \(\D\) 
	to a \((\D,O)\)-extension \(Q_P = x_0x_1x_2x_3x_4\) such that
	\(x_0x_1\) and \(x_3x_4\) are edges leaving \(x_1\) and \(x_3\), respectively,
	and such that every edge of \(H\) is used exactly once.
	Therefore, \(\{Q_P\colon P\in\D\}\) is a decomposition into \((\D,O)\)-extensions.
	
	Now, let \(\B\) be a decomposition of \(G\) into \((\D,O)\)-extensions 
	that minimizes \(\tau(\B)\).
	Suppose, for contradiction, that \(\tau(\B) > 0\).
	Let \(T = x_0x_1x_2x_3x_4\) be 
	a cycle of length \(4\) in \(\B\),
	where \(x_1x_2x_3 \in \D\) and \(x_0 = x_4\).
	Let \(B = y_1y_2y_3\) be an element of \(\D\) 
	such that \(B\neq T\) and \(y_1 = x_1\).
	Let \(Q = y_0y_1y_2y_3y_4\) be the element of~\(\B\) 
	that contains \(B\),
	and put \(T' = y_0x_1x_2x_3x_4\) and \(Q' = x_0y_1y_2y_3y_4\).
	Clearly,~\(T'\) and \(Q'\) are \((\D,O)\)-extensions,
	and \(T'\) is not a cycle.
	Moreover, if \(Q'\) is a cycle, 
	then the edges \(x_0x_1\), \(x_3x_4\), and \(y_3y_4\) are 
	directed towards \(x_0\), which implies \(d_O^-(x_0) \geq 3\),
	hence \(O\) is not an Eulerian orientation, a contradiction.
	Therefore, \(\B' = \B - T + T' - Q + Q'\)
	is a decomposition into \((\D,O)\)-extensions
	such that \(\tau(\B') \leq \tau(\B) -1\),
	a contradiction to the minimality of~\(\tau(\B)\).
\end{proof}

The following fact about decompositions into extensions are used
in Section~\ref{section:complete}.

\begin{fact}\label{fact:balanced+prehang}
	Let \(G\) be an \(8\)-regular graph,
	\(F\) be a \(4\)-factor of \(G\),
	\(\D\) be a balanced \(P_2\)-decomposition of \(F\),
	\(O\) be an Eulerian orientation of the edges of \(G-E(F)\),
	and \(\B\) be a decomposition of \(G\)
	into \((\D,O)\)-extensions.
	Then \(\B\) is balanced and \(\prehang(v,\B) =2\)
	for every vertex \(v\) of \(G\).
\end{fact}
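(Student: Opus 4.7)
The plan is to evaluate \(\B(v)\) and \(\prehang(v,\B)\) directly at an arbitrary vertex \(v\) by identifying which edges of \(G\) contribute to each quantity, and then invoke the Eulerian property of \(O\) to conclude.

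First I would observe that for every \((\D,O)\)-extension \(B = x_0x_1x_2x_3x_4\) in \(\B\), the two middle edges \(x_1x_2\) and \(x_2x_3\) belong to \(E(F)\) (since \(x_1x_2x_3\in \D\)), whereas the two end edges \(x_0x_1\) and \(x_3x_4\) lie in \(E\big(G-E(F)\big)\). Moreover, by the very definition of a \((\D,O)\)-extension, these end edges are oriented \(x_1\to x_0\) and \(x_3\to x_4\) in \(O\), which coincides exactly with the orientation assigned to the end edges of the tracking \(B\) when defining \(\B(v)\) and \(\prehang(v,\B)\).

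Next, since \(\B\) decomposes \(E(G)\) and the middle \(2\)-paths of its extensions form a \(P_2\)-decomposition of \(F\) (they are edge-disjoint elements of \(\D\) and cover \(E(F)\) because the remaining edges of each extension are in \(G-E(F)\)), the end edges of the extensions in \(\B\) collectively cover \(E\big(G-E(F)\big)\) exactly once, with their tracking-induced orientation agreeing with \(O\) edge-by-edge. Therefore, for every vertex \(v\in V(G)\),
\[
\B(v) \;=\; d_O^-(v) \qquad \text{and} \qquad \prehang(v,\B) \;=\; d_O^+(v),
\]
where the in- and out-degrees are measured with respect to the Eulerian orientation \(O\) of the \(4\)-factor \(G-E(F)\).

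Finally, since \(G-E(F)\) is \(4\)-regular and \(O\) is Eulerian, \(d_O^+(v)=d_O^-(v)\) and \(d_O^+(v)+d_O^-(v)=4\) at every vertex, forcing both to equal \(2\). Hence \(\B(v)=2=\prehang(v,\B)\) for every \(v\), which gives both conclusions of the fact. I do not anticipate any real obstacle here: the only subtle point is the bookkeeping check that the tracking-induced orientation on end edges coincides with \(O\), and this is immediate from the definition of \((\D,O)\)-extension, so the whole statement is a direct counting consequence of the hypotheses.
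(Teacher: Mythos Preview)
Your proposal is correct and follows essentially the same approach as the paper's proof: both identify \(\B(v)\) with \(d_O^-(v)\) and \(\prehang(v,\B)\) with \(d_O^+(v)\), and then use that \(O\) is an Eulerian orientation of the \(4\)-regular graph \(G-E(F)\) to conclude both equal \(2\). Your version is simply more explicit in justifying why the tracking-induced orientation on end edges coincides with \(O\), which the paper leaves implicit in the definitions.
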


\begin{proof}
	Let \(G\), \(\D\), \(O\), and \(\B\) be as in the statement,
	and put \(H = G-E(F)\).
	Since \(O\) is an Eulerian orientation of \(H\),
	\(d^+_O(v) = d^-_O(v) = 2\) for every vertex
	\(v\) of \(G\).
	By the definition of \(\B(v)\),
	\(\B(v) = d^-_O(v) = 2\) for every vertex \(v\) of \(G\).
	Therefore, \(\B\) is balanced.
	By the definition of \(\prehang(v,\B)\),
	\(\prehang(v,\B) = d^+_O(v) = 2\)
	for every vertex \(v\) of \(G\).
\end{proof}

\subsection{Trapped subgraphs and good orientation}

In this subsection we define two special concepts, namely, \emph{trapped subgraphs} and \emph{good orientations}, that are used throughout this section.

We say that 
an edge $uv \in F_2$ is \emph{trapped} by $\D_1$ if there exists 
a path $P \in \D_1$ whose end-vertices are precisely~$u$ and~$v$.
Alternatively, we say that \(P\) \emph{traps} the edge \(uv\).
Moreover, we say that
	
			an induced path \(uvw\) in \(G[F_2]\) is a \emph{$\D_1$-trapped \(P_2\)} 
			if the edges $uv$ and $vw$ are trapped by $\D_1$ and there exists 
			a path in $\D_1$ whose end-vertices are precisely $u$ and $w$
			(see Figure~\ref{fig:trapped-a});
			a triangle \(uvwu\) in $G[F_2]$ is a \emph{$\D_1$-trapped triangle} 
			(resp. \emph{$\D_1$-quasi-trapped triangle}) if all its edges 
			(resp. two of its edges) are trapped by~$\D_1$
			(see Figure~\ref{fig:trapped-b} and~\ref{fig:trapped-c}); 
			and a copy~\(H\) of~\(K_4\) in~\(G[F_2]\) is a \emph{$\D_1$-trapped \(K_4\)} 
			if four of its edges are trapped by \(\D_1\)
			(see Figure~\ref{fig:trapped-d}).
			We omit the decomposition \(\D_1\) when it is clear from the context.
			By a \emph{trapped subgraph} of \(G\) 
			we mean a subgraph of \(G[F_2]\) that is a trapped \(P_2\), 
			trapped triangle, or trapped \(K_4\).
			If a trapped edge \(e\) is not contained in any trapped subgraph or quasi-trapped triangle,
			then we say that \(e\) is a \emph{free} trapped edge.

\begin{figure}[h]\centering
	\begin{subfigure}[b]{.25\linewidth}
	\centering\scalebox{.8}{\begin{tikzpicture}[scale = 1]
\def\r3{1.73205}
\def\y{1.4142}

\node (a) [black vertex] at (0.5-\r3*0.5,\r3*0.5+0.5) {};
\node (b) [black vertex] at (1,\r3) {};
\node (c) [black vertex] at (1.5+\r3*0.5, \r3*0.5+0.5) {};
\node (d) [black vertex] at (2,0) {};
\node (e) [black vertex] at (1,-1) {};
\node (f) [black vertex] at (0,0) {};

\draw [line width=1.3pt,color=red,->] (a) -- (b);
\draw [line width=1.3pt,color=red,->] (a) -- (f);
\draw [line width=1.3pt,color=red,->] (c) -- (b);
\draw [line width=1.3pt,color=red,->] (c) -- (d);
\draw [line width=1.3pt,color=red,->] (e) -- (d);
\draw [line width=1.3pt,color=red,->] (e) -- (f);

\draw [line width=1.3pt,color=black, -] (f) -- (d);
\draw [line width=1.3pt,color=black, -] (d) -- (b);
		
\end{tikzpicture}}
	\caption{}\label{fig:trapped-a}
	\end{subfigure}%
	\begin{subfigure}[b]{.25\linewidth}
	\centering\scalebox{.8}{\begin{tikzpicture}[scale = 1]
\def\r3{1.73205}
\def\y{1.4142}

\node (a) [black vertex] at (0.5-\r3*0.5,\r3*0.5+0.5) {};
\node (b) [black vertex] at (1,\r3) {};
\node (c) [black vertex] at (1.5+\r3*0.5, \r3*0.5+0.5) {};
\node (d) [black vertex] at (2,0) {};
\node (e) [black vertex] at (1,-1) {};
\node (f) [black vertex] at (0,0) {};

\draw [line width=1.3pt,color=red,->] (a) -- (b);
\draw [line width=1.3pt,color=red,->] (a) -- (f);
\draw [line width=1.3pt,color=red,->] (c) -- (b);
\draw [line width=1.3pt,color=red,->] (c) -- (d);
\draw [line width=1.3pt,color=red,->] (e) -- (d);
\draw [line width=1.3pt,color=red,->] (e) -- (f);

\draw [line width=1.3pt,color=black, -] (f) -- (d);
\draw [line width=1.3pt,color=black, -] (d) -- (b);
\draw [line width=1.3pt,color=black, -] (b) -- (f);
		
\end{tikzpicture}}
	\caption{}\label{fig:trapped-b}
	\end{subfigure}%
	\begin{subfigure}[b]{.25\linewidth}
	\centering\scalebox{.8}{\begin{tikzpicture}[scale = 1]
\def\r3{1.73205}
\def\y{1.4142}

\node (b) [black vertex] at (1,\r3) {};
\node (c) [black vertex] at (1.5+\r3*0.5, \r3*0.5+0.5) {};
\node (d) [black vertex] at (2,0) {};
\node (e) [black vertex] at (1,-1) {};
\node (f) [black vertex] at (0,0) {};

\draw [line width=1.3pt,color=red,->] (c) -- (b);
\draw [line width=1.3pt,color=red,->] (c) -- (d);
\draw [line width=1.3pt,color=red,->] (e) -- (d);
\draw [line width=1.3pt,color=red,->] (e) -- (f);

\draw [line width=1.3pt,color=black, -] (f) -- (d);
\draw [line width=1.3pt,color=black, -] (d) -- (b);
\draw [line width=1.3pt,color=black, -] (b) -- (f);

\phantom{
	\node (a) [black vertex] at (0.5-\r3*0.5,\r3*0.5+0.5) {};
}
		
\end{tikzpicture}}
	\caption{}\label{fig:trapped-c}
	\end{subfigure}%
	\begin{subfigure}[b]{.25\linewidth}
	\centering\scalebox{.8}{\begin{tikzpicture}[scale = 0.7]


\node (alpha) [black vertex] at (0,0) {};
\node (a) [black vertex] at (2,0) {};
\node (beta) [black vertex] at (4,0) {};
\node (b) [black vertex] at (4,-2) {};

\node (gamma) [black vertex] at (4,-4) {};
\node (c) [black vertex] at (2,-4) {};
\node (delta) [black vertex] at (0,-4) {};

\node (d) [black vertex] at (0,-2) {};

\draw [line width=1.3pt,color=red,->] (alpha) -- (a);
\draw [line width=1.3pt,color=red,->] (alpha) -- (d);
\draw [line width=1.3pt,color=red,->] (beta) -- (a);
\draw [line width=1.3pt,color=red,->] (beta) -- (b);
\draw [line width=1.3pt,color=red,->] (gamma) -- (b);
\draw [line width=1.3pt,color=red,->] (gamma) -- (c);
\draw [line width=1.3pt,color=red,->] (delta) -- (c);
\draw [line width=1.3pt,color=red,->] (delta) -- (d);

\draw [line width=1.3pt,color=black] (a) -- (c);
\draw [line width=1.3pt,color=black] (b) -- (a);
\draw [line width=1.3pt,color=black] (b) -- (d);
\draw [line width=1.3pt,color=black] (c) -- (b);
\draw [line width=1.3pt,color=black] (c) -- (d);
\draw [line width=1.3pt,color=black] (d) -- (a);

\end{tikzpicture}}
	\caption{}\label{fig:trapped-d}
	\end{subfigure}%
	\caption{Trapped subgraphs of $F_2$}\label{fig:1}
\label{trapped-ex}
\end{figure}
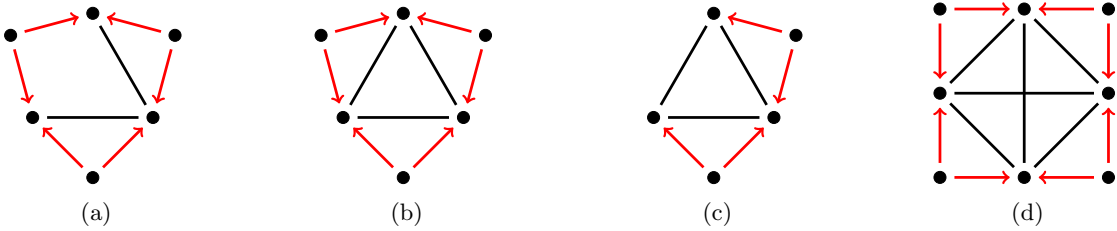

Let \(T\) be a trapped \(P_2\) or quasi-trapped triangle of \(G\),
where \(uv\) and \(vw\) are the trapped edges of \(T\).
We say that an orientation \(O\) of the edges of \(T\)
is \emph{consistent} if \(d^+_O(v) = d^-_O(v)\),
otherwise, we say that \(O\) is \emph{centered}.
Now, we are able to define our special Eulerian orientation.

\begin{definition}\label{def:good-orientation}
	Let \(G\) be an \(8\)-regular graph,
	\(F\) be a \(4\)-factor of \(G\),
	\(\D\) be a balanced \(P_2\)-decomposition of \(F\).
We say that an Eulerian orientation \(O\) of the edges of \(G-E(F)\)
is \emph{good} if the following hold.
\begin{itemize}
\item[(i)] 	If \(T\) is a trapped \(P_2\) of \(G\),
		then \(O\) induces a consistent orientation 
		of the edges of~\(T\);
\item[(ii)]	if \(T\) is a trapped triangle of \(G\),
		then \(O\) induces an Eulerian orientation
		of the edges of~\(T\); and

\item[(iii)]	
		if \(T\) is a quasi-trapped triangle of \(G\), 
		then \(O\) induces an Eulerian orientation
		or a centered orientation
		of the edges of~\(T\)
		(see Figure~\ref{fig:quasi-good}).
\end{itemize}
\end{definition}

\begin{figure}[h]\centering
	\begin{subfigure}[b]{.33\linewidth}
	\centering\scalebox{1}{\begin{tikzpicture}[scale = 1,rotate=-60]
\def\r3{1.73205}
\def\y{1.4142}

\node (b) [black vertex] at (1,\r3) {};
\node (c) [black vertex] at (1.5+\r3*0.5, \r3*0.5+0.5) {};
\node (d) [black vertex] at (2,0) {};
\node (e) [black vertex] at (1,-1) {};
\node (f) [black vertex] at (0,0) {};

\draw [line width=1.3pt,color=red,->] (c) -- (b);
\draw [line width=1.3pt,color=red,->] (c) -- (d);
\draw [line width=1.3pt,color=red,->] (e) -- (d);
\draw [line width=1.3pt,color=red,->] (e) -- (f);

\draw [line width=1.3pt,color=black, ->] (f) -- (d);
\draw [line width=1.3pt,color=black, ->] (d) -- (b);
\draw [line width=1.3pt,color=black, ->] (b) -- (f);
		
\end{tikzpicture}}
	\end{subfigure}%
	\begin{subfigure}[b]{.33\linewidth}
	\centering\scalebox{1}{\begin{tikzpicture}[scale = 1,rotate=-60]
\def\r3{1.73205}
\def\y{1.4142}

\node (b) [black vertex] at (1,\r3) {};
\node (c) [black vertex] at (1.5+\r3*0.5, \r3*0.5+0.5) {};
\node (d) [black vertex] at (2,0) {};
\node (e) [black vertex] at (1,-1) {};
\node (f) [black vertex] at (0,0) {};

\draw [line width=1.3pt,color=red,->] (c) -- (b);
\draw [line width=1.3pt,color=red,->] (c) -- (d);
\draw [line width=1.3pt,color=red,->] (e) -- (d);
\draw [line width=1.3pt,color=red,->] (e) -- (f);

\draw [line width=1.3pt,color=black, ->] (d) -- (f);
\draw [line width=1.3pt,color=black, ->] (d) -- (b);
\draw [line width=1.3pt,color=black, ->] (b) -- (f);
		
\end{tikzpicture}}
	\end{subfigure}%
	\begin{subfigure}[b]{.33\linewidth}
	\centering\scalebox{1}{\begin{tikzpicture}[scale = 1,rotate=-60]
\def\r3{1.73205}
\def\y{1.4142}

\node (b) [black vertex] at (1,\r3) {};
\node (c) [black vertex] at (1.5+\r3*0.5, \r3*0.5+0.5) {};
\node (d) [black vertex] at (2,0) {};
\node (e) [black vertex] at (1,-1) {};
\node (f) [black vertex] at (0,0) {};

\draw [line width=1.3pt,color=red,->] (c) -- (b);
\draw [line width=1.3pt,color=red,->] (c) -- (d);
\draw [line width=1.3pt,color=red,->] (e) -- (d);
\draw [line width=1.3pt,color=red,->] (e) -- (f);

\draw [line width=1.3pt,color=black, ->] (f) -- (d);
\draw [line width=1.3pt,color=black, ->] (b) -- (d);
\draw [line width=1.3pt,color=black, ->] (b) -- (f);
		
\end{tikzpicture}}
	\end{subfigure}%
	\caption{Eulerian and centered orientations of quasi-trapped triangles.}\label{fig:quasi-good}\vspace{-.5cm}
\end{figure}
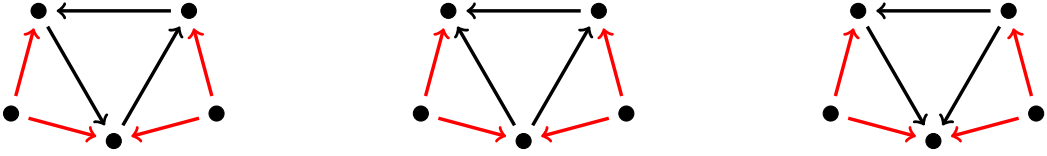

Note that, since \(\D_1\) is balanced, \(\D_1(v) = 2\)
for every \(v\in V(G)\).
Since each path \(P\) in \(\D_1\) traps at most one edge of \(F_2\)
and \(\D_1(v) = 2\) for every vertex \(v\) of \(G\),
each vertex \(v\) of \(G\) is incident to at most two trapped edges.
Therefore, the subgraph \(F_2^t\) of \(F_2\) induced by the \(\D_1\)-trapped edges of \(F_2\)
is composed of vertex-disjoint paths and cycles.
This implies the following fact.
Two quasi-trapped triangles can have one edge 
in common (see Figure~\ref{chain-quasi-trapped-ex}),
but each edge of \(F_2\) is contained 
in at most one trapped subgraph of \(F_2\).
Indeed, let \(T\) be a trapped subgraph of \(G\),
and \(T^t\) be the subgraph of \(F_2\)
induced by the trapped edges of~\(T\).
Clearly, \(T^t\) is a subgraph of \(F_2^t\).
If \(T\) is a trapped triangle (resp. trapped \(K_4\)), 
then \(T^t\) is a triangle (resp. a cycle of length \(4\)), 
hence \(T^t\) is a component of~\(F_2^t\).
If \(T\) is a trapped~\(P_2\), say \(T = uvw\), 
then \(T^t\) is the path \(uvw\),
and \(d_{F_2^t}(u) = d_{F_2^t}(w) = 1\).
Therefore,~\(T^t\) is a component of~\(F_2^t\).
Since two components of \(F_2^t\) do not intercept, 
each edge of \(F_2\) is contained in at most one trapped subgraph of \(G\).
Moreover, if \(T\) is a quasi-trapped triangle and intercepts
a trapped subgraph~\(T'\),
then \(T'\) must be a trapped~\(K_4\).

In what follows we study sequences of quasi-trapped triangles. 
Note that two distinct quasi-trapped triangles have at most one trapped edge in common.
We say that a sequence \(S=T_1\cdots T_k\) of quasi-trapped triangles
is a \emph{chain} of quasi-trapped triangles
if \(T_i\) and \(T_{i+1}\) have a trapped edge in common for \(i=1,\ldots,k-1\).
If \(T_k\) and \(T_1\) also have a trapped edge in common,
then we say that \(S\) is a \emph{closed} chain of quasi-trapped triangles,
otherwise we say that \(S\) is \emph{open}.
The following fact about chains of quasi-trapped triangles are used in the proof of 
Lemma~\ref{lemma:good-orientation}.

\begin{fact}\label{fact:quasi-trapped-chain}
Let \(S=T_1\cdots T_k\) be a chain of quasi-trapped triangles,
and put \(G_S = \cup_{i=1}^k T_i\).
If~\(S\) is open, then the trapped edges of \(G_S\) induce 
a Hamiltonian path \(P = a_0\cdots a_{k+1}\) in~\(G_S\),
where \(a_1\) and \(a_k\) are precisely the two vertices of odd degree in \(G_S\).
If \(S\) is closed then \(k>3\) and the following hold.
\begin{itemize}
	\item[(i)]	If \(k=4\), then \(G_S\) is a trapped \(K_4\); and
	\item[(ii)]	If \(k>4\), then \(G_S\) is a \(4\)-regular graph
			and the trapped edges of \(G_S\) 
			induce a Hamiltonian cycle in \(G_S\)
\end{itemize}
\end{fact}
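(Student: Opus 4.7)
My plan is to work with the \emph{center} \(a_i\) of each quasi-trapped triangle \(T_i\), defined as the unique vertex of \(T_i\) incident to both of its trapped edges. Because the subgraph \(F_2^t\) of trapped edges has maximum degree~\(2\) (as observed just before the fact), each center \(a_i\) has \(F_2^t\)-degree exactly~\(2\), and the only trapped edges at \(a_i\) in the whole of \(F_2\) are the two trapped edges of \(T_i\). This rigidity drives everything. The first structural step will be: if \(T_i\) and \(T_{i+1}\) share a trapped edge \(e\), then \(e = a_i a_{i+1}\) and \(a_i \neq a_{i+1}\). Both centers lie on \(e\); a coincidence \(a_i = a_{i+1}\) would, by the degree-\(2\) constraint, force \(T_i\) and \(T_{i+1}\) to share their full trapped-edge pair and hence their vertex set, contradicting distinctness of the triangles along the chain. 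So the consecutive-sharing trapped edges are precisely \(a_1a_2,\ldots,a_{k-1}a_k\) (plus \(a_ka_1\) in the closed case).

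For the open case, I would introduce \(a_0\) and \(a_{k+1}\): the outer vertex of \(T_1\)'s trapped \(P_2\) other than \(a_2\), and the outer vertex of \(T_k\)'s trapped \(P_2\) other than \(a_{k-1}\). Iterating the degree-\(2\)/distinctness argument shows that \(a_0,a_1,\ldots,a_{k+1}\) are pairwise distinct; crucially, the openness hypothesis (namely, \(T_1\) and \(T_k\) share no trapped edge) is exactly what prevents the identification \(a_0 = a_{k+1}\) that would otherwise close the trapped path into a cycle. The trapped edges of \(G_S\) then form the path \(a_0 a_1\cdots a_{k+1}\), which is Hamiltonian in \(G_S\) because each \(T_i\) has vertex set \(\{a_{i-1},a_i,a_{i+1}\}\) (with the obvious convention at \(i=1,k\)). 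A direct count of which triangles contain a given \(a_j\) --- \(a_0\) lying only in \(T_1\), \(a_1\) only in \(T_1,T_2\), and a middle \(a_j\) only in \(T_{j-1},T_j,T_{j+1}\) --- yields degree sequence \(2,3,4,\ldots,4,3,2\), identifying \(a_1\) and \(a_k\) as the only odd-degree vertices.

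For the closed case, the same analysis produces trapped edges \(a_1a_2,\ldots,a_ka_1\) cycling through the centers. The degeneracy \(k=3\) is immediately ruled out because then all three triangles would coincide with the common vertex set \(\{a_1,a_2,a_3\}\). For \(k=4\), the four triangles \(T_i = \{a_{i-1},a_i,a_{i+1}\}\) (indices mod~\(4\)) together cover every pair of \(\{a_1,a_2,a_3,a_4\}\): the four trapped edges around the cycle plus the two diagonals \(a_1a_3\) and \(a_2a_4\), each appearing as a non-trapped edge of two of the triangles, yielding a \(K_4\) with exactly four trapped edges and so a trapped \(K_4\). For \(k>4\), no two non-trapped edges \(a_{i-1}a_{i+1}\) coincide, and each \(a_i\) is incident in \(G_S\) to exactly two trapped edges \(a_{i-1}a_i,a_ia_{i+1}\) and two non-trapped edges \(a_{i-2}a_i,a_ia_{i+2}\), so \(G_S\) is \(4\)-regular with trapped Hamiltonian cycle \(a_1\cdots a_ka_1\).

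The main obstacle I anticipate is the distinctness part of the open case (and the analogous non-degeneracy in the closed case): since the chain definition only constrains consecutive triangles to share a trapped edge, I must rule out distant centers coinciding and, most delicately, show that openness forbids \(a_0=a_{k+1}\). Both are handled uniformly by the same observation: any such coincidence, combined with the \(F_2^t\) max-degree-\(2\) property, pins down two triangles in the sequence to share their full trapped-edge pair and hence to be equal, contradicting the assumption that the chain consists of distinct triangles.
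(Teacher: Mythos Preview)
The paper states this result as a Fact without proof, so there is no argument of theirs to compare against; your approach via the centers \(a_i\) and the degree-\(2\) bound on \(F_2^t\) is the natural one, and almost all of it goes through. In particular, the key observation that coinciding \emph{centers} \(a_i=a_j\) with \(1\le i<j\le k\) force \(T_i=T_j\) is correct, and the closed-case analysis is fine.

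The gap is precisely the step you yourself single out as most delicate: ruling out \(a_0=a_{k+1}\) in the open case. Your claim that any such coincidence forces two triangles of \(S\) to share their full trapped-edge pair fails here, because neither \(a_0\) nor \(a_{k+1}\) is a center. Concretely, take the \(F_2^t\)-component containing the chain to be a cycle \(a_0a_1\cdots a_ka_0\) of length \(k+1\ge 5\), and suppose \(a_{i-1}a_{i+1}\in E(F_2)\) for each \(i=1,\ldots,k\) but \(a_ka_1\notin E(F_2)\). Then \(T_1,\ldots,T_k\) (centered at \(a_1,\ldots,a_k\)) are distinct quasi-trapped triangles and \(S=T_1\cdots T_k\) is a \emph{maximal} chain; it is \emph{open}, since the trapped-edge sets \(\{a_0a_1,a_1a_2\}\) of \(T_1\) and \(\{a_{k-1}a_k,a_ka_0\}\) of \(T_k\) are disjoint. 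Yet here \(a_0=a_{k+1}\), the trapped edges of \(G_S\) form a Hamiltonian \emph{cycle} on \(k+1\) vertices rather than a path on \(k+2\), and the two trapped edges meeting at \(a_0\) lie in \(T_1\) and \(T_k\) respectively without forcing any two triangles of \(S\) to coincide. So openness forbids \(T_1\) and \(T_k\) from sharing a trapped edge, but not from sharing the non-center endpoint. The Fact as literally stated seems to overlook this configuration; what does survive---and is all that the proof of Lemma~\ref{lemma:good-orientation} actually uses---is that \(a_1\) and \(a_k\) remain the unique odd-degree vertices of \(G_S\), which your degree count still yields.
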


We conclude that the subgraph \(F_2^t\) of \(F_2\) induced by the trapped edges of \(F_2\) can be decomposed into free trapped edges, trapped \(P_2\)'s, trapped triangles, trapped~\(K_4\)'s,
and maximal sequences of quasi-trapped triangles (that are not contained in trapped \(K_4\)'s).

\begin{lemma}\label{lemma:good-orientation}
Let \(G\) be an \(8\)-regular graph, 
\(F\) be a \(4\)-factor of \(G\), 
\(\D\) be a balanced \(P_2\)-decomposition of~\(F\),
and put \(H = G -E(F)\).
Then, there is a good Eulerian orientation of the edges of \(H\).
\end{lemma}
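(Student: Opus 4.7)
The plan is to construct the good Eulerian orientation $O$ in two stages. First, I will prescribe orientations on the edges of $H = G - E(F)$ that lie in some trapped subgraph or in some maximal chain of quasi-trapped triangles, so that conditions (i)--(iii) of Definition~\ref{def:good-orientation} hold locally. Second, I will extend this partial orientation to an Eulerian orientation of all of $H$ via a standard parity/flow argument, using that $H$ is $4$-regular.

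For the first stage, I will use the decomposition of the trapped subgraph $F_2^t$ into free trapped edges, trapped $P_2$'s, trapped triangles, trapped $K_4$'s, and maximal chains of quasi-trapped triangles not contained in any trapped $K_4$, as described just before the statement. For each trapped $P_2$ $uvw$, I orient the two trapped edges as $u \to v \to w$: this is consistent at $v$, so (i) holds. For each trapped triangle, I orient its three edges as a directed cycle: this is Eulerian, so (ii) holds. For each trapped $K_4$ with trapped Hamiltonian $4$-cycle $a\,b\,c\,d\,a$, I orient the four trapped edges as the two directed paths $a\to b\to c$ and $a\to d\to c$, the diagonal $ac$ as $c\to a$, and leave $bd$ unconstrained; a direct check shows that the four quasi-trapped sub-triangles $abc$, $bcd$, $cda$, $dab$ are then Eulerian or centered, so (iii) holds. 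For each closed chain of quasi-trapped triangles with $k>4$, Fact~\ref{fact:quasi-trapped-chain}(ii) provides a Hamiltonian cycle $a_1\cdots a_k a_1$ of trapped edges in the $4$-regular graph $G_S$; I orient it as the directed cycle $a_1\to a_2\to\cdots\to a_k\to a_1$ and each non-trapped chord $a_{i-1}a_{i+1}$ as $a_{i+1}\to a_{i-1}$, making every triangle $T_i$ a directed triangle. For an \emph{open} chain, Fact~\ref{fact:quasi-trapped-chain} gives a Hamiltonian path $a_0\cdots a_{k+1}$ of trapped edges; I orient the path from $a_0$ to $a_{k+1}$ and every non-trapped chord opposite to it, so that every $T_i$ becomes a directed triangle.

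For the second stage, I verify that the above prescriptions give, at every vertex $v$ of $G$, prescribed out-degree $p^+(v)\le 2$ and prescribed in-degree $p^-(v)\le 2$. This follows from the earlier observation that each vertex of $G$ is incident to at most two trapped edges, together with a per-structure check that no single prescription assigns three edges of the same direction at any vertex, even when a vertex is touched by several distinct trapped structures. It then remains to orient the still-unoriented edges of $H$ so that $d^+(v) = d^-(v) = 2$ at every $v$, which amounts to an orientation of an even-degree subgraph with prescribed vertex imbalances summing to zero on every component. A standard Hakimi-type orientation theorem (or an Eulerian-circuit argument) furnishes such an orientation, and the resulting $O$ is Eulerian on $H$ and good by construction.

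The principal obstacle is the trapped $K_4$: the clean recipe of orienting the trapped $4$-cycle as a directed cycle (which works so well for longer closed chains) would force each of the four quasi-trapped sub-triangles to be consistent at its centre, and then the two diagonals would each have to be oriented in two different directions simultaneously to make all four sub-triangles Eulerian --- an outright contradiction. Breaking the symmetry by choosing an antipodal pair of vertices as source and sink in the $4$-cycle, as above, resolves this. Once the $K_4$ case is handled, the chains of quasi-trapped triangles are routine, and the final Eulerian extension is standard.
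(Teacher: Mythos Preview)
Your local prescriptions are correct and essentially match the paper's choices: the directed-path orientation on trapped $P_2$'s, the directed-cycle orientation on trapped triangles and on chains of quasi-trapped triangles, and the ``two directed paths $a\to b\to c$, $a\to d\to c$ plus $c\to a$'' recipe on trapped $K_4$'s all yield orientations satisfying (i)--(iii). The gap is entirely in your second stage.

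You assert that the residual subgraph of still-unoriented edges ``has prescribed vertex imbalances summing to zero on every component,'' and then invoke a Hakimi-type theorem. But you have not verified this per-component condition, and in general it can fail. Concretely: for a trapped $P_2$ oriented $u\to v\to w$, you create imbalance $-1$ at $u$ and $+1$ at $w$, yet the edges $uv,vw$ are now removed from the residual graph, and nothing forces $u$ and $w$ to lie in the same residual component. For an open chain $a_0\cdots a_{k+1}$ with $k\ge 2$, your prescription leaves imbalance $-1$ at $a_1$ and $+1$ at $a_k$, while the interior vertices $a_2,\ldots,a_{k-1}$ have \emph{all four} $H$-edges prescribed and hence are isolated in the residual graph; $a_1$ and $a_k$ each have a single residual edge leaving the chain, with no reason to land in the same component. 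Your $K_4$ recipe likewise produces imbalance $+1$ at $a$ and $-1$ at $c$, each of which has a single residual edge going outside the $K_4$. If any of these source/sink pairs are separated in the residual graph, no Hakimi-type extension exists.

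This is precisely why the paper does not fix the local orientations first. Instead it builds an auxiliary Eulerian graph $H^*$ by deleting the trapped edges of each structure and, crucially, adding a bridging edge $a_1a_k$ for each open chain (and handling trapped $P_2$'s by vertex-splitting rather than deletion). A single Eulerian orientation $O^*$ of $H^*$ then determines the orientation of every non-structure edge \emph{and} dictates which of the two mirror-image local orientations to use on each structure, so that the balances match up automatically at the boundary vertices. In effect, the auxiliary edges in $H^*$ are exactly the device that transports your unmatched $\pm 1$ imbalances between components; without them (or an equivalent connectivity argument), your extension step does not go through.
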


\begin{proof}

Let \(G\), \(\D\), \(F\), and \(H\) be as in the statement.
In what follows, we construct a new Eulerian graph \(H^*\) from \(H\),
then we use an Eulerian orientation of \(H^*\) to obtain a good Eulerian orientation of \(H\).

First, we deal with trapped subgraphs, and then with (chains of) quasi-trapped triangles.
For every trapped~\(P_2\), say \(T = uvw\), where \(uv\) and \(vw\) are trapped edges,
we split edges in the following way.
We add a new vertex \(z_T\), delete the edges \(uv\) and \(vw\),
and add the edges \(uz_T\) and \(z_Tw\). 
For every trapped triangle or trapped \(K_4\), say \(T\),
delete all the trapped edges of \(T\).
It is clear that the graph \(H'\) obtained after these operations is Eulerian.

Now, let \(S= T_1\cdots T_k\) be a maximal chain of quasi-trapped triangles in \(H'\),
and put \(G_S=\cup_{i=1}^k T_i\).
If \(S\) is closed then \(k > 4\), because \(H'\) has no trapped \(K_4\).
By Fact~\ref{fact:quasi-trapped-chain}, 
\(G_S\) is a \(4\)-regular subgraph of \(H'\)
and we delete the edges of \(G_S\).
Now, suppose that \(S\) is open.
If~\(k=1\), then delete the edges of \(T_1\).
If~\(k>1\), then by Fact~\ref{fact:quasi-trapped-chain},
\(G_S\) contains a Hamiltonian path induced 
by the trapped edges in \(G_S\), say \(P = a_0\cdots a_{k+1}\),
where~\(a_1\) and~\(a_k\) have odd degree in \(G_S\).
In this case, we delete the edges of~\(G_S\), and add the edge~\(a_1a_k\).

It is clear that the graph \(H^*\) obtained after these operations is again Eulerian.
Therefore, let \(O^*\) be an Eulerian orientation of the edges of \(H^*\).
In what follows, we ``undo'' the operations above and obtain a good orientation \(O\) of the edges of \(H\).

We must show how to orient each edge of \(H\).
If \(e\) is an edge in \(E(H)\cap E(H^*)\) that is not contained in any trapped \(K_4\) of \(G\),
then we direct \(e\) in \(O\) with the same direction \(e\) has in \(O^*\).
Let \(S= T_1\cdots T_k\) be a maximal chain of quasi-trapped triangles in \(H\), and put \(G_S=\cup_{i=1}^k T_i\).
If \(k=1\), 
then \(G_S\) is a triangle, and by the definition of \(H^*\), no edge of \(G_S\) is in \(H^*\),
and we give an Eulerian orientation to the edges of \(G_S\).
If \(k>4\) and \(S\) is a closed chain,
then \(S\) is not a trapped \(K_4\).
By the definition of \(H^*\), no edge of \(G_S\) is in \(H^*\).
By Fact~\ref{fact:quasi-trapped-chain}, \(G_S\) is \(4\)-regular and the trapped edges
of \(G_S\) induce a Hamiltonian cycle \(C_S = a_0\cdots a_{k-1}a_0\) in \(G_S\).
Note that the edges in \(G_S - E(C_S)\) are precisely \(a_ia_{i+2}\) for \(i=0,\ldots,k-1\),
where \(a_k = a_0\) and \(a_{k+1} = a_1\).
Thus, (in \(O\)) orient \(a_ia_{i+1}\) from \(a_i\) to \(a_{i+1}\),
and \(a_ia_{i+2}\) from \(a_{i+2}\) to \(a_i\), for \(i=0,\ldots,k-1\).
Note that the triangle \(a_ia_{i+1}a_{i+2}\) has an Eulerian orientation, for \(i=0,\ldots,k-1\).
Now, suppose that \(k\geq 2\) and \(S\) is an open chain.
By Fact~\ref{fact:quasi-trapped-chain},
\(G_S\) contains a Hamiltonian path, say \(P = a_0\cdots a_{k+1}\),
where \(a_1\) and \(a_k\) have odd degree in \(G_S\).
From the construction of \(H^*\), we have \(a_1a_k\in E(H^*)\).
Let \(O_S\) be the orientation of \(G_S\) where the edge \(a_ia_{i+1}\) 
is oriented from \(a_{i}\) to \(a_{i+1}\), for \(i = 0,\ldots,k\)
and \(a_ia_{i+2}\) is oriented from \(a_{i+2}\) to \(a_i\),
for \(i = 0,\ldots,k-1\) (see Figure~\ref{chain-quasi-trapped-ex}).
Note that the triangle \(a_ia_{i+1}a_{i+2}\) has an Eulerian orientation in \(O_S\).
If \(a_1a_k\) is oriented from \(a_k\) to \(a_1\) in \(O^*\),
then orient in \(O\) the edges of \(G_S\) according to \(O_S\).
Otherwise \(a_1a_k\) is oriented from \(a_1\) to \(a_k\) in \(O^*\),
and we orient in \(O\) the edges of \(G_S\) according to \(O_S^-\).
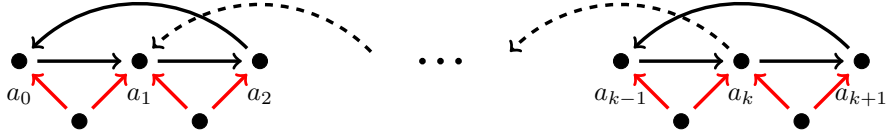
\begin{figure}[h]\centering
\scalebox{1}{\begin{tikzpicture}[scale=0.8, ,dots/.style={circle,draw=black,fill=black,thick,
inner sep=0pt,minimum size=0.6mm}]

\node (a0) [black vertex, label=below:$a_0$] at (0,0) {};
\node (a1) [black vertex, label=below:$a_1$] at (2,0) {};
\node (a2) [black vertex, label=below:$a_2$] at (4,0) {};
\node (ap2) [black vertex, label=below:$a_{k-1}$] at (10,0) {};
\node (ap1) [black vertex, label=below:$a_{k}$] at (12,0) {};
\node (ap) [black vertex, label=below:$a_{k+1}$] at (14,0) {};

\node (a4) [] at (6,0) {};
\node (ap3) [] at (8,0) {};

\node (alpha0) [black vertex] at (1,-1) {};
\node (alpha1) [black vertex] at (3,-1) {};
\node (alphap2) [black vertex] at (11,-1) {};
\node (alphap1) [black vertex] at (13,-1) {};

\draw [line width=1.3pt,color=red,->] (alpha0) -- (a0);
\draw [line width=1.3pt,color=red,->] (alpha0) -- (a1);
\draw [line width=1.3pt,color=red,->] (alpha1) -- (a1);
\draw [line width=1.3pt,color=red,->] (alpha1) -- (a2);

\draw [line width=1.3pt,color=red,->] (alphap2) -- (ap2);
\draw [line width=1.3pt,color=red,->] (alphap2) -- (ap1);
\draw [line width=1.3pt,color=red,->] (alphap1) -- (ap1);
\draw [line width=1.3pt,color=red,->] (alphap1) -- (ap);

\draw [line width=1.3pt,color=black,->] (a0) -- (a1);

\draw [line width=1.3pt,color=black,->] (a1) -- (a2);


\draw [line width=1.3pt,color=black,->] (ap2) -- (ap1);

\draw [line width=1.3pt,color=black,->] (ap1) -- (ap);

\def\xx{6.7}
\node (t) at (\xx,0) [dots] {};
\node (tt) at (\xx+0.3,0) [dots] {};
\node (ttt) at (\xx+0.6,0) [dots] {};

\draw (a2) edge [line width=1.3pt,color=black, 
			bend right=45,->] (a0);
\draw (ap) edge [line width=1.3pt,color=black, 
			bend right=45,->] (ap2);
\draw (ap1) edge [dashed, line width=1.3pt,color=black, 
			bend right=45,->] (ap3);
\draw (a4) edge [dashed, line width=1.3pt,color=black, 
			bend right=45,->] (a1);

\end{tikzpicture}}
\caption{Eulerian orientation of quasi-trapped triangles.}
\label{chain-quasi-trapped-ex}
\end{figure}
We have just chosen a direction to every edge in \(E(H')\setminus E(H^*)\), 
except the (not trapped) edges in trapped~\(K_4\)'s.
Thus, if \(T\) is a quasi-trapped triangle in \(H\) not contained
in a trapped \(K_4\), then \(O\) induces an Eulerian orientation of 
the edges of \(T\).

Now, let \(K\) be a trapped \(K_4\),
and let \(x_ix_{i+1}\) be the trapped edges of \(K\),
for \(i=0,1,2,3\), where \(x_4 = x_0\).
By construction, \(H^*\) contains the edges \(x_0x_2\) and \(x_1x_3\).
Suppose, without loss of generality, that in \(O^*\) 
the edge \(x_0x_2\) is directed from \(x_0\) to \(x_2\),
and the edge \(x_1x_3\) is directed from \(x_1\) to \(x_3\).
We orient the edges of \(K\) in \(O\) in the following way.
The edge \(x_0x_2\) is directed from \(x_0\) to \(x_2\),
and the edge \(x_1x_3\) is directed from \(x_3\) to \(x_1\)
(i.e., with the direction opposite to the direction of \(x_1x_3\) in \(O^*\)).
Moreover, orient the trapped edges of \(K\) such that
\(x_1x_2x_3\) and \(x_1x_0x_3\) are two directed paths from \(x_1\) to~\(x_3\),
i.e.,
\(x_ix_{i+1}\) is directed from \(x_i\) to \(x_{i+1}\) for \(i=1,2\),
and directed from \(x_{i+1}\) to \(x_i\), for \(i=0,3\)
(see Figure~\ref{fig:K4-orientation}).
Note that the orientations induced by \(O\) 
of the triangles \(x_0x_1x_3\) and \(x_1x_2x_3\) are Eulerian
and that the orientations induced by \(O\) 
of the triangles \(x_0x_2x_3\) and \(x_0x_1x_2\) are centered
(see Item (iii) of Definition~\ref{def:good-orientation}).
We have chosen a direction to every edge in \(E(H)\cap E(H')\).
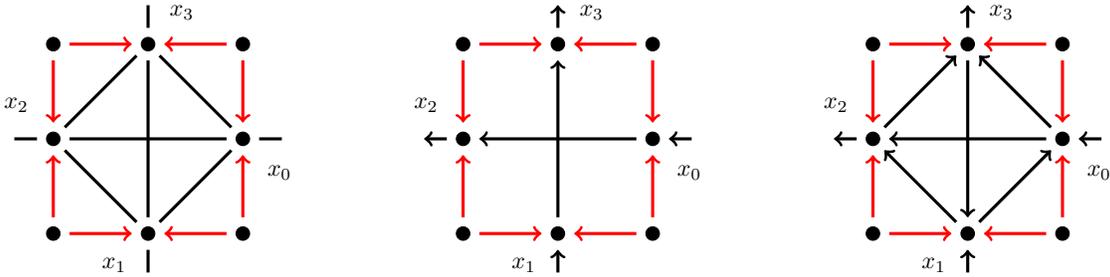
\begin{figure}[h]\centering
	\begin{subfigure}[b]{.33\linewidth}
	\centering\scalebox{.9}{\begin{tikzpicture}[scale = 0.7]


\node (alpha) [black vertex] at (-2,2) {};
\node (a) [black vertex, label={[xshift=0.5cm]above:$x_3$}] at (0,2) {};
\node (aout) [] at (0,3) {};

\node (beta) [black vertex] at (2,2) {};
\node (b) [black vertex, label={[yshift=-0.5cm]right:$x_0$}] at (2,0) {};
\node (bout) [] at (3,0) {};

\node (gamma) [black vertex] at (2,-2) {};
\node (c) [black vertex, label={[xshift=-0.5cm]below:$x_1$}] at (0,-2) {};
\node (cout) [] at (0,-3) {};

\node (delta) [black vertex] at (-2,-2) {};
\node (d) [black vertex, label={[yshift=0.5cm]left:$x_2$}] at (-2,0) {};
\node (dout) [] at (-3,0) {};

\draw [line width=1.3pt,color=red,->] (alpha) -- (a);
\draw [line width=1.3pt,color=red,->] (alpha) -- (d);
\draw [line width=1.3pt,color=red,->] (beta) -- (a);
\draw [line width=1.3pt,color=red,->] (beta) -- (b);
\draw [line width=1.3pt,color=red,->] (gamma) -- (b);
\draw [line width=1.3pt,color=red,->] (gamma) -- (c);
\draw [line width=1.3pt,color=red,->] (delta) -- (c);
\draw [line width=1.3pt,color=red,->] (delta) -- (d);

\draw [line width=1.3pt,color=black] (a) -- (c);
\draw [line width=1.3pt,color=black] (b) -- (a);
\draw [line width=1.3pt,color=black] (b) -- (d);
\draw [line width=1.3pt,color=black] (c) -- (b);
\draw [line width=1.3pt,color=black] (c) -- (d);
\draw [line width=1.3pt,color=black] (d) -- (a);

\draw [line width=1.3pt,color=black] (cout) -- (c);
\draw [line width=1.3pt,color=black] (a) -- (aout);
\draw [line width=1.3pt,color=black] (bout) -- (b);
\draw [line width=1.3pt,color=black] (d) -- (dout);

\end{tikzpicture}}
	\end{subfigure}%
	\begin{subfigure}[b]{.33\linewidth}
	\centering\scalebox{.9}{\begin{tikzpicture}[scale = 0.7]


\node (alpha) [black vertex] at (-2,2) {};
\node (a) [black vertex, label={[xshift=0.5cm]above:$x_3$}] at (0,2) {};
\node (aout) [] at (0,3) {};

\node (beta) [black vertex] at (2,2) {};
\node (b) [black vertex, label={[yshift=-0.5cm]right:$x_0$}] at (2,0) {};
\node (bout) [] at (3,0) {};

\node (gamma) [black vertex] at (2,-2) {};
\node (c) [black vertex, label={[xshift=-0.5cm]below:$x_1$}] at (0,-2) {};
\node (cout) [] at (0,-3) {};

\node (delta) [black vertex] at (-2,-2) {};
\node (d) [black vertex, label={[yshift=0.5cm]left:$x_2$}] at (-2,0) {};
\node (dout) [] at (-3,0) {};

\draw [line width=1.3pt,color=red,->] (alpha) -- (a);
\draw [line width=1.3pt,color=red,->] (alpha) -- (d);
\draw [line width=1.3pt,color=red,->] (beta) -- (a);
\draw [line width=1.3pt,color=red,->] (beta) -- (b);
\draw [line width=1.3pt,color=red,->] (gamma) -- (b);
\draw [line width=1.3pt,color=red,->] (gamma) -- (c);
\draw [line width=1.3pt,color=red,->] (delta) -- (c);
\draw [line width=1.3pt,color=red,->] (delta) -- (d);

\draw [line width=1.3pt,color=black,->] (c) -- (a);
\draw [line width=1.3pt,color=black,->] (b) -- (d);

\draw [line width=1.3pt,color=black,->] (cout) -- (c);
\draw [line width=1.3pt,color=black,->] (a) -- (aout);
\draw [line width=1.3pt,color=black,->] (bout) -- (b);
\draw [line width=1.3pt,color=black,->] (d) -- (dout);

\end{tikzpicture}}
	\end{subfigure}%
	\begin{subfigure}[b]{.33\linewidth}
	\centering\scalebox{.9}{\begin{tikzpicture}[scale = 0.7]


\node (alpha) [black vertex] at (-2,2) {};
\node (a) [black vertex, label={[xshift=0.5cm]above:$x_3$}] at (0,2) {};
\node (aout) [] at (0,3) {};

\node (beta) [black vertex] at (2,2) {};
\node (b) [black vertex, label={[yshift=-0.5cm]right:$x_0$}] at (2,0) {};
\node (bout) [] at (3,0) {};

\node (gamma) [black vertex] at (2,-2) {};
\node (c) [black vertex, label={[xshift=-0.5cm]below:$x_1$}] at (0,-2) {};
\node (cout) [] at (0,-3) {};

\node (delta) [black vertex] at (-2,-2) {};
\node (d) [black vertex, label={[yshift=0.5cm]left:$x_2$}] at (-2,0) {};
\node (dout) [] at (-3,0) {};

\draw [line width=1.3pt,color=red,->] (alpha) -- (a);
\draw [line width=1.3pt,color=red,->] (alpha) -- (d);
\draw [line width=1.3pt,color=red,->] (beta) -- (a);
\draw [line width=1.3pt,color=red,->] (beta) -- (b);
\draw [line width=1.3pt,color=red,->] (gamma) -- (b);
\draw [line width=1.3pt,color=red,->] (gamma) -- (c);
\draw [line width=1.3pt,color=red,->] (delta) -- (c);
\draw [line width=1.3pt,color=red,->] (delta) -- (d);

\draw [line width=1.3pt,color=black,->] (a) -- (c);
\draw [line width=1.3pt,color=black,->] (b) -- (a);
\draw [line width=1.3pt,color=black,->] (b) -- (d);
\draw [line width=1.3pt,color=black,->] (c) -- (b);
\draw [line width=1.3pt,color=black,->] (c) -- (d);
\draw [line width=1.3pt,color=black,->] (d) -- (a);

\draw [line width=1.3pt,color=black,->] (cout) -- (c);
\draw [line width=1.3pt,color=black,->] (a) -- (aout);
\draw [line width=1.3pt,color=black,->] (bout) -- (b);
\draw [line width=1.3pt,color=black,->] (d) -- (dout);

\end{tikzpicture}}
	\end{subfigure}%
	\caption{Good orientation of a trapped \(K_4\).}\vspace{-.2cm}
\label{fig:K4-orientation}
\end{figure}

If \(T\) is a trapped triangle in \(H\),
then in \(O\) we orient the edges of \(T\) with any Eulerian orientation
(see Item (ii) of Definition~\ref{def:good-orientation}).
Finally, let \(T=uvw\) be a trapped~\(P_2\) in~\(H\).
There exists a vertex \(z_T\) in \(H'\) incident exactly
to the edges \(uz_T\) and \(z_Tw\).
Thus \hbox{\(d_{O^*}^+(z_T) = d_{O^*}^-(z_T)=1\).} 
If~\(uz_T\) is directed from \(u\) to \(z_T\),
we orient \(uv\) from~\(u\) to~\(v\),
and \(vw\) from \(v\) to \(w\) in \(O\);
otherwise, we orient~\(uv\) from~\(v\) to~\(u\),
and \(vw\) from \(w\) to \(v\) in \(O\).
This gives a consistent orientation to every trapped \(P_2\) in \(H\)
(see Item (i) of Definition~\ref{def:good-orientation}).
We conclude that \(O\) is a good Eulerian orientation of \(H\).
\end{proof}

\subsection{Double-trapped edges and exceptional extensions}

We say that an edge \(uv\in F_2\) is \emph{double-trapped} by~\(\D_1\)
if there exist two distinct paths in \(\D_1\)
whose end-vertices are precisely \(u\) and \(v\).
Let \(e\in F_2\) be double-trapped by~\(\D_1\).
If~\(P_1\) and~\(P_2\) are the paths of~\(\D_1\) that trap~\(e\), 
then \(P_1+e\) and \(P_2+e\) are triangles of~\(G\).
Thus, if an edge \(e\in F_2\) is double-trapped then it is the case
for any orientation of \(F_2\).
Therefore, if \(\B\) is a decomposition of \(G\) into \((\D_1,O)\)-extensions
for some Eulerian orientation~\(O\) of~\(F_2\), 
and~\(T\) is the element of \(\B\) that contains \(e\),
then \(T\) contains a triangle.

Our next goal is to show that every \(8\)-regular graph \(G\)
admits a decomposition into paths of length \(4\)
and a special object which we call \emph{exceptional extension}.

Let \(e\in F_2\) be double-trapped by the paths~\(P_1\) and~\(P_2\)
of \(\D_1\), \(O\) be an Eulerian orientation of \(F_2\),
and~\(\B\) be a decomposition into \((\D_1,O)\)-extensions of \(G\) with no cycles.
Let \(T_i\) be the element of~\(\B\) that contains~\(P_i\),
for~\(i=1,2\).
The \emph{exceptional extension} that contains~\(e\) is the pair
\(X = \{T_1,T_2\}\) (see Figure~\ref{fig:exceptionals}).
It is clear that an exceptional extension contains a path of length \(4\) and a trail of length \(4\) that contains a triangle.
We say that a decomposition into extensions \(\B\) with no cycles
is \emph{exceptional}
if every element \(T\in\B\) that contains a triangle
is contained in exactly one exceptional extension.

\begin{figure}[h]\centering
	\begin{subfigure}[b]{.5\linewidth}
	\centering\scalebox{1}{\begin{tikzpicture}[scale = 1]


\node (0) [black vertex] at (-2,0) {};
\node (1) [black vertex] at (-1,1) {};
\node (2) [black vertex] at (0,0) {};
\node (3) [black vertex] at (1,1) {};
\node (4) [black vertex] at (-1,1) {};

\node (0') [black vertex] at (-2,2) {};
\node (2') [black vertex] at (0,2) {};
\node (4') [black vertex] at (2,1) {};

\draw [line width=1.3pt,color=red,->] (2) -- (1);
\draw [line width=1.3pt,color=red,->] (2) -- (3);
\draw [line width=1.3pt,color=black,->] (1) -- (0);
\draw [line width=1.3pt,color=black,->] (3) -- (4);

\draw [line width=1.3pt,color=red,->] (2') -- (1);
\draw [line width=1.3pt,color=red,->] (2') -- (3);
\draw [line width=1.3pt,color=black,->] (1) -- (0');
\draw [line width=1.3pt,color=black,->] (3) -- (4');
\end{tikzpicture}}
	\end{subfigure}%
	\begin{subfigure}[b]{.5\linewidth}
	\centering\scalebox{1}{\begin{tikzpicture}[scale = 1]


\node (0) [black vertex] at (-2,1) {};
\node (1) [black vertex] at (-1,1) {};
\node (2) [black vertex] at (0,0) {};
\node (3) [black vertex] at (1,1) {};
\node (4) [black vertex] at (-1,1) {};

\node (0') [black vertex] at (0,3) {};
\node (2') [black vertex] at (0,2) {};
\node (4') [] at (2,1) {};

\draw [line width=1.3pt,color=red,->] (2) -- (1);
\draw [line width=1.3pt,color=red,->] (2) -- (3);
\draw [line width=1.3pt,color=black,->] (1) -- (0);
\draw [line width=1.3pt,color=black,->] (3) -- (4);

\draw [line width=1.3pt,color=red,->] (2') -- (1);
\draw [line width=1.3pt,color=red,->] (2') -- (3);
\draw [line width=1.3pt,color=black,->] (1) edge [bend left] (0');
\draw [line width=1.3pt,color=black,->] (3) edge [bend right] (0');
	
\end{tikzpicture}}
	\end{subfigure}%
\caption{Exceptional extensions}\label{fig:exceptionals}
\end{figure}
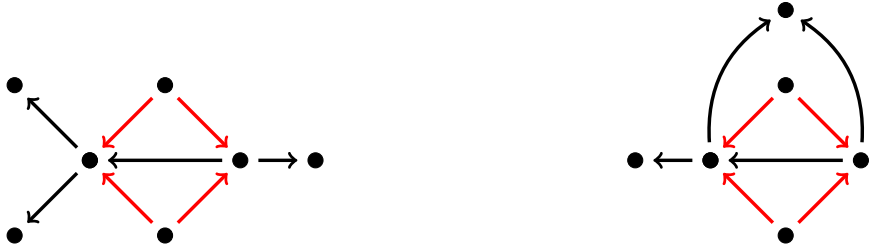

Now we are able to prove the main result of this section.
Given a \(4\)-tracking decomposition \(\B\),
we denote by \(\tau'(\B)\) the number of elements of \(\B\)
that are paths.

\begin{lemma}\label{lemma:path+exceptional}
	Let \(G\) be an \(8\)-regular graph, 
	\(F\) be a \(4\)-factor of \(G\), 
	\(\D\) be a balanced \(P_2\)-decomposition of~\(F\),
	and \(O\) be a good Eulerian orientation of the edges of \(G-E(F)\).
	Then~\(G\) admits an exceptional decomposition into \((\D,O)\)-extensions.
\end{lemma}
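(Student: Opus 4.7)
My plan is to start from the no-$4$-cycle decomposition provided by Lemma~\ref{lemma:8-reg->extensions} and refine it through local modifications so that every triangle-containing element is paired into an exceptional extension.

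The first step is to characterize exactly when an extension contains a triangle. Writing the middle path as $P = x_1 x_2 x_3 \in \D$, the trail $x_0 x_1 x_2 x_3 x_4$ has a triangle iff $x_0 = x_3$ or $x_1 = x_4$, which is equivalent to $P$ trapping the edge $x_1 x_3 \in F_2$ \emph{and} this trapped edge being chosen as one of the two extension edges of $P$. Since every path in $\D$ traps at most one $F_2$-edge, the triangle-containing extensions of a given decomposition are in bijection with the trapped edges that are ``self-extended'' by their trapping paths. In particular, double-trapped edges automatically yield exceptional extensions: the two paths $P_1,P_2$ trapping such an edge $e$ produce two extensions, exactly one of which uses $e$ (and contains a triangle) while the other is a path, so the pair is an exceptional extension by definition.

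Next I would recast the remaining task as a system of local matching choices. At each vertex $v$, the two outgoing $F_2$-edges at $v$ (two, because $O$ is Eulerian on the $4$-regular graph $F_2 = G-E(F_1)$) must be matched to the two $\D$-paths ending at $v$ (two, because $\D$ is balanced). Call a single-trapped edge $e=uw$ oriented $u\to w$ \emph{resolved} when the matching at $u$ assigns $e$ to the non-trapping $\D$-path with endpoint $u$. The decomposition is exceptional iff every single-trapped edge is resolved. I would then show that simultaneous resolution is possible by case analysis on the trapped substructure containing each trapped edge, using the good orientation throughout: free trapped edges are resolved independently; for a trapped $P_2$ the consistent orientation puts the two trapped edges at distinct source vertices so no conflict arises; for a trapped triangle and a trapped $K_4$ the Eulerian/centered pattern of Definition~\ref{def:good-orientation} lets one check vertex by vertex that the two $\D$-paths ending there are precisely the two paths trapping the incident structure-edges, so the unique matching required by one resolution also satisfies the other; chains of quasi-trapped triangles are handled analogously by propagating along the chain as suggested by Figure~\ref{chain-quasi-trapped-ex}.

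The main obstacle will be reconciling these forced local matchings with the no-$4$-cycle condition. I would address this by selecting, among all decompositions into $(\D,O)$-extensions that resolve every single-trapped edge, one that minimizes $\tau(\B)$; if $\tau(\B)>0$, I would adapt the swap argument from Lemma~\ref{lemma:8-reg->extensions} by picking a $4$-cycle $T\in\B$ and a companion extension $Q$ sharing a vertex $v$ with $T$ whose local matching can be flipped without destroying any resolution. Such a $v$ exists either because it lies outside every trapped substructure (where the matching is free), or because the good orientation provides an alternative resolving matching of the substructure containing $v$. The resulting $\B'$ has strictly fewer $4$-cycles and all resolutions intact, contradicting minimality. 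The technical heart of the argument is the second item: exhibiting, for each type of trapped substructure, the flexibility of resolving matchings promised by good orientation; this is precisely what Definition~\ref{def:good-orientation} was set up to afford.
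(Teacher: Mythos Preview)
Your route is viable but genuinely different from the paper's. The paper runs an extremal argument on $\tau'(\B)$: among all no-cycle $(\D,O)$-extension decompositions it picks one maximizing the number of paths, assumes some triangle-containing element has its triangle edge only single-trapped, and swaps that edge with the companion outgoing edge at the same endpoint; the good-orientation hypotheses are used in a three-case analysis (whether $T'$, $Q'$, or both become $4$-cycles) to supply further repairing swaps. You instead first force every single-trapped edge to be resolved by local matching, and only afterward minimize $\tau(\B)$. Two remarks. First, your step~1 does not need the good orientation at all: since each $\D$-path traps at most one $F_2$-edge and $G$ is simple, at every vertex at most one outgoing edge is trapped by each of the two $\D$-paths ending there, so a resolving matching always exists vertex by vertex---your substructure case analysis is unnecessary here. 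Second, the good orientation is genuinely needed in your step~3, and your sketch can be made to work: for a $4$-cycle $T=x_0x_1x_2x_3x_0$ one may swap at $x_1$ or at $x_3$; if \emph{both} swaps destroy a resolution, then $\{x_0,x_1,x_3\}$ supports a trapped $P_2$, trapped triangle, or quasi-trapped triangle whose orientation (two $F_2$-edges entering $x_0$) violates Definition~\ref{def:good-orientation}, while the in-degree argument of Lemma~\ref{lemma:8-reg->extensions} shows neither swap creates a new $4$-cycle. So both proofs ultimately exploit the same local obstructions; yours front-loads the structural constraint and the paper's front-loads the no-cycle constraint.
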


\begin{proof}
	Let \(G\), \(F\), \(\D\), and \(P\) be as in the statement,
	By Lemma~\ref{lemma:8-reg->extensions},
	there is a decomposition~\(\B\) of~\(G\) 
	into \((\D,O)\)-extensions with no cycles.
	Let~\(\B\) be a decomposition of \(G\) 
	into \((\D,O)\)-extensions with no cycles
	that maximizes~\(\tau'(\B)\).
	We claim that if there is an element \(T\) in \(\B\)
	that contains a triangle, 
	then \(T\) contains a double-trapped edge.
	
	Suppose, for a contradiction, that there is an element of \(\B\), 
	that contains a triangle.
	Let \(T = x_0x_1x_2x_3x_4\in\B\cup\B^-\),
	where \(x_1x_2x_3 \in \D\), \(x_4 = x_1\),
	and \(x_3x_4\) is not double-trapped.
	Let \(Q = y_0y_1y_2y_3y_4\) be an element of \(\B\cup\B^-\)
	with \(Q\neq T\), and
	such that \(y_1y_2y_3\in\D\) and \(y_3 = x_3\),
	and put \(T' = x_0x_1x_2x_3y_4\), \(Q' = y_0y_1y_2y_3x_4\).
	Clearly, \(T'\) is a path or a cycle.
	Moreover, 
	since \(x_3x_4\) is not double-trapped,
	\(y_1y_2y_3x_4\) is not a triangle,
	hence \(Q'\) contains a triangle 
	only if \(Q\) contains the triangle \(y_0y_1y_2y_3\).
	If \(T'\) and \(Q'\) are not cycles, 
	then \(\B' = \B-T+T'-Q+Q'\) is a 
	decomposition into \((\D,O)\)-extensions with no cycles, 
	and \(\tau'(\B') = \tau'(\B)+1\),
	a contradiction to the maximality of \(\tau'(\B)\).
	In what follows we divide the proof on whether~\(T'\) and \(Q'\) are cycles.
	
	{\smallskip\noindent\bf Case 1: \(T'\) is a cycle and \(Q'\) is not a cycle.}
	In this case, 
	we have \(x_0 = y_4\) and \(x_0x_1x_3y_4\)
	is a triangle in \(H\).
	Thus, the edge \(x_0x_1\) is not trapped, 
	otherwise \(x_0x_1x_3y_4\) is either a quasi-trapped triangle
	without Eulerian or centered orientation,
	or a trapped triangle without Eulerian orientation.
	Let \(R = z_0z_1z_2z_3z_4\) an element in \(\B\cup\B^-\)
	different from \(T\),
	where \(z_1z_2z_3 \in \D\) and \(z_1 = x_1\).
	Since \(x_0x_1\) is not trapped, we have \(x_0 \neq z_3\).
	Moreover, we have \(z_4 \neq x_0\), 
	otherwise \(d^-_O(x_0) \geq 3\).
	Therefore, \(R' = x_0z_1z_2z_3z_4\) is a path.
	Also, since \(G\) has no parallel edges,
	we have \(z_0\neq x_3\) and \(z_0 \neq y_4\).
	Thus, \(T'' = z_0x_1x_2x_3y_4\) is a path.
	Therefore \(\B' = \B-T+T''-Q+Q'-R+R'\) is a 
	decomposition into \((\D,O)\)-extensions with no cycles, 
	and \(\tau'(\B') = \tau'(\B)+1\),
	a contradiction to the maximality of \(\tau'(\B)\).
	
	{\smallskip\noindent\bf Case 2: \(Q'\) is a cycle and \(T'\) is not a cycle.}
	Let \(U = w_0w_1w_2w_3w_4\) be an element in \(\B\cup\B^-\)
	different from~\(Q\),
	where \(w_1w_2w_3\in\D\) and \(w_1 = y_1\).
	If \(w_0 = y_3\), then \(x_4x_3y_1y_0\) is either a quasi-trapped triangle
	without Eulerian or centered orientation,
	or a trapped triangle without Eulerian orientation.
	Moreover, \(w_0\neq x_4\) because \(G\) has no parallel edges.
	Therefore, \(Q'' = w_0y_1y_2y_3x_4\) is a path.
	Now, let \(U' = y_0w_1w_2w_3w_4\).
	If \(U'\) contains the triangle \(y_0w_1w_2w_3\), 
	then \(w_1w_2w_3\) traps \(y_1y_0\) 
	and \(y_1y_0x_3\) is either a trapped \(P_2\),
	without consistent orientation,
	or a trapped triangle without Eulerian orientation.
	Therefore, \(U'\) contains a triangle 
	only if \(U\) contains the triangle \(w_1w_2w_3w_4\).	
	If \(U'\) is a cycle, then we have \(w_4 = y_0\) and 
	the edges \(x_3x_4\), \(y_1y_0\), and \(w_3w_4\)
	are directed toward~\(x_1\), hence \(d^-_O(x_1) \geq 3\).
	Thus \(U'\) is not a cycle.
	Therefore \(\B' = \B-T+T'-Q+Q''-U+U'\) is a 
	decomposition into \((\D,O)\)-extensions with no cycles, 
	and \(\tau'(\B') = \tau'(\B)+1\),
	a contradiction to the maximality of~\(\tau'(\B)\).

	{\smallskip\noindent\bf Case 3: \(T'\) and \(Q'\) are cycles.}	
	In this case we have \(x_4\neq y_1\), 
	otherwise \(y_0y_1\) and \(x_0x_1\) are parallel edges. 
	Let \(R = z_0z_1z_2z_3z_4\) and \(U = w_0w_1w_2w_3w_4\) be elements in \(\B\cup\B^-\)
	different from \(T\) and \(Q\),
	where \(z_1z_2z_3,w_1w_2w_3 \in \D\), and \(z_1 = x_1\) and \(w_1 = y_1\).
	We claim that \(R\neq U\).
	Indeed, if \(R = U\), then \(z_1z_2z_3 = w_1w_2w_3\)
	and \(y_1y_0\) is a trapped edge.
	Thus, \(y_1y_0x_3\) is a trapped \(P_2\),
	without consistent orientation.
	Thus, analogously to cases~\(1\) and~\(2\),
	\(R' = x_0z_1z_2z_3z_4\),
	\(T'' = z_0x_1x_2x_3y_4\),
	\(Q'' = w_0y_1y_2y_3x_4\) are paths,
	and \(U' = y_0w_1w_2w_3w_4\) is not a cycle.
	Put \(\B' = \B - T + T'' - Q + Q'' - R + R' - U + U'\).
	Again, \(\B\) is a decomposition into \((\D,O)\)-extensions 
	with no cycles
	such that \(\tau'(\B') \geq \tau'(\B) + 1\),
	a contradiction to the maximality of \(\tau'(\B)\).
	
	We conclude that every element \(T\) of \(\B\) 
	that contains a triangle 
	contains a double-trapped edge, say \(e_T\).
	Suppose that \(P_1\) and \(P_2\) are the elements of \(\D\)
	that trap \(e_T\), where \(P_1\subset T\),
	and let \(T'\) be the element of \(\B\) that contains \(P_2\).
	The pair \(X_T = \{T,T'\}\) is the exceptional extension 
	that contains \(e_T\).
	Thus, for every element~\(T\) of~\(\B\) 
	that contains a triangle
	we obtain an exceptional extension \(X_T\).
	It is clear that this exceptional extension is unique.
	Therefore, \(\B\) is exceptional.	
\end{proof}

\section{Complete decompositions}\label{section:complete}

In this section we relax the properties of the decomposition given
by Lemma~\ref{lemma:path+exceptional},
and prove that every \(8\)-regular graph 
admits a \(P_4\)-decomposition.
We start with the decomposition given 
by Lemma~\ref{lemma:path+exceptional}, 
and switch edges between the elements of this decomposition to obtain a decomposition
containing only paths.
Thus, the elements of the decompositions we consider here
do not depend on \(\D_1\) and \(O\).

First we give some definitions.
Let \(G\) be an 8-regular graph, 
and let \(\B\) be a \(4\)-tracking decomposition of \(G\) with no cycles.
An \emph{exceptional pair} of \(\B\) is 
a pair of elements \(X = \{T_1,T_2\}\) of \(\B\cup\B^-\) 
such that \(\bar T_1 \neq \bar T_2\), \(T_1 = a_1b_1c_1d_1e_1\), \(T_2 = a_2b_2c_2d_2e_2\),
and \(e_2 = b_2 = b_1\) and \(d_1=d_2\).
We say that the vertices \(c_1\) and \(c_2\) are the 
\emph{connection vertices} of \(X\), 
and that \(d_2\) is the \emph{center} of~\(X\).
By the definition of hanging edge, 
the edges \(a_1b_1\) and \(a_2b_2\) are hanging edges at~\(b_1\),
and \(d_1e_1\) and \(d_2e_2\) are hanging edges at \(d_1\).
Note that an exceptional pair is exactly a pair of the underlying graphs
of an exceptional extension.

The following definition presents the properties of the decompositions
given by Lemma~\ref{lemma:path+exceptional} that are used in 
the proof of our main result.

\begin{definition}\label{def:complete}
Let \(G\) be an \(8\)-regular graph,
and let \(\B\) be a balanced \(4\)-tracking decomposition of~\(G\) with no cycles.
We say that \(\B\) is \emph{complete} if the following hold.
\begin{enumerate}[(i)]
\item	\(\prehang(v,\B) \geq 1\), for every \(v\in V(G)\);
\item	if \(T\in\B\) contains a triangle, 
	then \(T\) or \(T^-\) is contained in 
	an exceptional pair of~\(\B\); and
\item	if $X$ is an exceptional pair of \(\B\) and $P$ is an element of $\B$
	that contains the central vertex of $X$ and a hanging edge at a connection vertex of $X$,
	then the central vertex of \(X\) is an end-vertex of $P$.
\end{enumerate}
\end{definition}

Now, we prove that the decomposition given by Lemma~\ref{lemma:path+exceptional} induces a complete decomposition.

\begin{lemma}\label{lemma:complete-decomposition}
	If \(G\) is an \(8\)-regular graph, 
	then \(G\) admits a complete decomposition.
\end{lemma}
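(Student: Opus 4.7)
The plan is to construct $\B$ by chaining together the results of the preceding sections, and then to verify each clause of Definition~\ref{def:complete}. First I apply Petersen's Theorem~\ref{thm:Pet} to obtain a $4$-factorization $\{F_1,F_2\}$ of $G$, then use Lemma~\ref{lemma:4-reg->2-path} to produce a balanced $P_2$-decomposition $\D$ of $F_1$. Lemma~\ref{lemma:good-orientation} furnishes a good Eulerian orientation $O$ of the edges of $G - E(F_1)$, and finally Lemma~\ref{lemma:path+exceptional} provides an exceptional decomposition $\B$ of $G$ into $(\D, O)$-extensions with no cycles. It then remains to show that this $\B$ is complete.

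Balancedness of $\B$ and the absence of cycles are built into the construction, and Fact~\ref{fact:balanced+prehang} immediately gives $\prehang(v, \B) = 2 \geq 1$ for every vertex $v$, so (i) holds. For (ii), suppose $T \in \B$ contains a triangle. Because $\B$ is exceptional, $T$ lies in a unique exceptional extension $\{T, T'\}$ defined from a double-trapped edge $e = bd$ trapped by two $\D$-paths $P_1, P_2$. Since $e$ appears in exactly one tracking, exactly one of $T, T'$ uses $e$ as a hanging edge, and that tracking is precisely the one containing the triangle; by possibly replacing $T$ and $T'$ by their reverse trackings, the resulting pair matches the definition of exceptional pair, so $T$ or $T^-$ belongs to it.

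The only substantive work is clause (iii), which I expect to be the main obstacle. Let $X = \{T_1, T_2\}$ be an exceptional pair with central vertex $d = d_1 = d_2$, connection vertices $c_1, c_2$, and let $b$ denote the common vertex $b_1 = b_2 = e_2$. The middle $\D$-paths of $T_1$ and $T_2$ are $b c_1 d$ and $b c_2 d$, and since $\D(d) = 2$ by balance, these are the only two $\D$-paths having $d$ as an endpoint. Now suppose $P = y_0 y_1 y_2 y_3 y_4 \in \B$ contains the vertex $d$ together with a hanging edge at $c_1$ (the case of $c_2$ is symmetric). A hanging edge at $c_1$ in $P$ forces $c_1 \in \{y_1, y_3\}$, and by considering $P^-$ if necessary I may assume $c_1 = y_1$, so the $\D$-path in $P$ is $c_1 y_2 y_3$.

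I would then rule out $d \in \{y_0, y_2, y_3\}$ one case at a time. The case $d = y_0$ would force $d c_1$ to be an $F_2$-edge (as the initial hanging edge of $P$), contradicting $c_1 d \in F_1$ (since $c_1 d$ is an edge of the $\D$-path $b c_1 d$). The case $d = y_2$ would make $c_1 d y_3$ a $\D$-path, placing the edge $c_1 d$ in two distinct $\D$-paths, contradicting edge-disjointness of $\D$. The case $d = y_3$ would require a $\D$-path of the form $c_1 y_2 d$, i.e., a $\D$-path with endpoints $c_1$ and $d$, but both $\D$-paths ending at $d$ have $b \neq c_1$ as the other endpoint. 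Hence $d = y_4$, so $d$ is an end-vertex of $P$, establishing (iii). This completes the verification that $\B$ is a complete decomposition.
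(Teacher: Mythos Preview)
Your proof is correct and follows essentially the same route as the paper: assemble $\B$ by chaining Petersen's Theorem, Lemma~\ref{lemma:4-reg->2-path}, Lemma~\ref{lemma:good-orientation}, and Lemma~\ref{lemma:path+exceptional}, then check Definition~\ref{def:complete}(i)--(iii) for this particular $\B$. The only noteworthy difference is in the verification of~(iii): the paper argues that if the center $d$ were an interior vertex of $P$ then $P$ would contribute a third hanging edge at $d$, contradicting $\prehang(d,\B)=2$ from Fact~\ref{fact:balanced+prehang}, whereas you instead rule out each bad position of $d$ in $P$ directly using the $F_1/F_2$ split and the observation that the only two $\D$-paths ending at $d$ are $bc_1d$ and $bc_2d$ --- both arguments are valid.
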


\begin{proof}
	Let \(G\) be an \(8\)-regular graph.
	By Petersen's Theorem~\cite{Pe1891}, 
	\(G\) admits a \(2\)-factorization, 
	say \(\{F_1,F_2,F_3,F_4\}\).
	Thus, \(F = F_1 + F_2\) and \(H = F_3+F_4\) are \(4\)-factors of~\(G\).
	By Lemma~\ref{lemma:4-reg->2-path}, 
	\(F\) admits a balanced \(P_2\)-decomposition \(\D\).
	By Lemma~\ref{lemma:good-orientation}, 
	there exists a good orientation \(O\) of the edges of \(H\).
	By Lemma~\ref{lemma:path+exceptional},
	\(G\) admits an exceptional decomposition~\(\B\) 
	into \((\D,O)\)-extensions.
	In what follows, we prove that \(\B\) satisfies
	each item of Definition~\ref{def:complete}.
	By Fact~\ref{fact:balanced+prehang}, 
	\(\B\) is balanced and \(\prehang(v,\B) = 2\geq 1\)
	for every vertex~\(v\) of~\(G\).
	This proves item~(i) of 
	Definition~\ref{def:complete}.
	Since \(\B\) is an exceptional decomposition~\(\B\) has no cycles, 
	and if \(T\in\B\) contains a triangle,
	then \(T\) is contained in exactly one 
	exceptional extension,
	which implies that \(T\) is contained in exactly one 
	exceptional pair of \(\B\).
	This proves item~(ii) of Definition~\ref{def:complete}.
	
	Now, suppose \(X = \{T_1,T_2\}\) is an exceptional pair 
	of~\(\B\), and \(P\in \B\) is an element that contains 
	a hanging edge \(xy\) 
	at a connection vertex \(x\) of \(X\).
	Suppose that~\(P\) contains the center \(c\) of \(X\).
	Note that there are two hanging edges at \(c\) contained in \(E(T_1)\cup E(T_2)\).
	By the definition of \(X\), 
	we have \(xc\in E(T_1)\cup E(T_2)\).
	Therefore, \(P\) contains a path \(yxzc\), 
	for some vertex \(z\) of~\(G\).
	If~\(c\) is not an end-vertex of \(P\),
	then there is another vertex~\(z'\) such that 
	\(P\) contains the path \(yxzcz'\).
	Thus, \(P\) is exactly the tracking \(yxzcz'\),
	and \(cz\) is a hanging edge of~\(c\).
	Therefore there are three hanging edges at \(c\),
	a contradiction to \(\prehang(c,\B) = 2\).
	This proves item (iii) of Definition~\ref{def:complete}.
\end{proof}

Suppose \(\B\) is a complete \(4\)-tracking decomposition.
Let \(T = \{T_1,T_2\}\) be an exceptional pair, where \(T_1 = a_1b_1c_1d_1e_1\) is a path.
We say that the edge \(b_1d_1\) is the \emph{pivotal} edge of~\(T_1\).
Note that the pivotal edge of \(T_1\) is an edge of \(T_2\).
Therefore, \(T_1\) is contained in at most one exceptional pair.
Moreover, if \(w\) is the center of \(T\), then \(d_{T_1+T_2}(w) = 5\),
hence~\(w\) is not the center of any other exceptional pair.

Now we are able to prove our main theorem.
Recall that \(\tau'(\B)\) is the number of trackings of \(\B\) that induce paths.
\begin{theorem}
	If $G$ is an 8-regular graph, 
	then $G$ admits a balanced \(4\)-tracking decomposition.
\end{theorem}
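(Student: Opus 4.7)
The plan is to start from a complete \(4\)-tracking decomposition of \(G\) furnished by Lemma~\ref{lemma:complete-decomposition} and argue by extremality on \(\tau'\). Concretely, I fix a complete \(4\)-tracking decomposition \(\B\) of \(G\) that maximizes \(\tau'(\B)\), and claim that every tracking of this \(\B\) induces a path. Since any complete decomposition is, by definition, a balanced \(4\)-tracking decomposition, this claim delivers the balanced (path) tracking decomposition required by the theorem and Conjecture~\ref{conj:koulonc}.

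Suppose for contradiction that some \(T\in\B\) fails to induce a path. By item~(ii) of Definition~\ref{def:complete}, \(T\) is contained in an exceptional pair \(X=\{T_1,T_2\}\); after replacing each \(T_i\) by \(T_i^-\) if necessary, I would write \(T_1=a_1b_1c_1d_1e_1\) and \(T_2=a_2b_1c_2d_1b_1\), so that \(T_1\) is a path, the triangle of \(T_2\) is \(b_1c_2d_1\), the center of \(X\) is \(w=d_1\), and the connection vertices are \(c_1,c_2\). Note that the pivotal edge \(b_1d_1\) of \(T_1\) is precisely the closing edge of the triangle in \(T_2\), and that the two hanging edges at \(w\) are \(d_1e_1\) (from \(T_1\)) and \(d_1b_1\) (from \(T_2\)).

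Item~(i) of Definition~\ref{def:complete} guarantees \(\prehang(c_2,\B)\geq 1\), so there is a tracking \(P\in\B\), distinct from \(T_1\) and \(T_2\), that uses a hanging edge at \(c_2\). I would then perform a local surgery on \(T_1\), \(T_2\), and \(P\) to obtain a new balanced \(4\)-tracking decomposition \(\B'\) in which the triangle of \(T_2\) is broken, by rerouting one of the two \(c_2\)-incident edges of \(P\) together with the two \(w\)-incident hanging edges of \(X\); the typical outcome is that \(T_2\) becomes a path while any triangle that re-emerges is absorbed into a fresh exceptional pair consisting of the modified \(T_1\) and \(P\). I would then verify that \(\B'\) is complete and that \(\tau'(\B')>\tau'(\B)\), contradicting the choice of \(\B\).

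The main obstacle is that the surgery must avoid four bad outcomes: creating a \(4\)-cycle (forbidden by the ``no cycles'' clause), producing a repeated edge (forbidden since \(G\) is simple), leaving some vertex \(v\) with \(\prehang(v,\B')=0\), or producing a triangle-containing trail that is not contained in an exceptional pair of \(\B'\). Ruling these out is where item~(iii) of Definition~\ref{def:complete} becomes essential: it forces any tracking that simultaneously meets the center \(w\) and a hanging edge at a connection vertex to terminate at \(w\), which rigidly constrains how \(P\) can interact with \(X\). Combined with the observation that the pivotal edge of \(T_1\) lies inside \(T_2\) (so each \(T_1\) sits in at most one exceptional pair) and with \(\prehang(w,\B)\geq 1\), I expect the verification to reduce to a handful of cases parameterized by whether \(P\) visits \(w\), visits \(b_1\), or visits neither, each case being closed by a tailored rerouting involving \(P\) and at most one further auxiliary tracking.
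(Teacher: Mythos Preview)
Your overall framing—take a complete decomposition maximizing \(\tau'\), locate an exceptional pair, perform a local surgery, and re-verify completeness—matches the paper exactly. The plan as written, however, misses the one step that makes the surgery close.

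The paper does \emph{not} work with a single tracking \(P\) hanging at \(c_2\). It takes trackings \(P_1,P_2\) hanging at \emph{both} connection vertices and proves, via item~(iii) of Definition~\ref{def:complete} together with balancedness and the fact that \(b=b_1\) is already an end-vertex of \(T_2\), that at least one of \(P_1,P_2\) avoids \(b\). This choice is the whole point: once \(P_i\) misses \(b\), the single swap \(P'=b\,c_i c_3 d_3 e_3\) (moving the edge \(c_ib\) into \(P'\) and the hanging edge \(c_ia_3\) out of \(P_i\)) is safe, and the remaining edges \(E(T_1)\cup E(T_2)-c_ib+c_ia_3\) split into two explicit paths \(T_1',T_2'\) with no auxiliary tracking needed. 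Your case split ``whether \(P\) visits \(w\), visits \(b_1\), or neither'' is precisely the difficulty the two-connection-vertex trick sidesteps; by committing to \(c_2\) alone you have no mechanism to escape the bad case, and ``at most one further auxiliary tracking'' is not how the argument actually closes.

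Two smaller points. First, your expected outcome is off: in the paper's surgery both \(T_1'\) and \(T_2'\) are paths, and if a triangle survives it lives in \(P'\), only because \(P_i\) already carried the triangle \(b_3c_3d_3e_3\); the fresh exceptional pair is then \(\{P',W\}\) where \(W\) was \(P_i\)'s old partner, not a pair involving the modified \(T_1\). Second, the verification you call ``a handful of cases'' is the bulk of the paper's proof: re-checking item~(iii) of Definition~\ref{def:complete} for \(\B'\) requires separate analyses for \(R\in\{P',T_1',T_2'\}\) and for \(P'\) appearing inside a new exceptional pair, and these rely on the concrete form of \(T_1',T_2',P'\), which you never specify.
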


\begin{proof}
Let \(G\) be an \(8\)-regular graph.
By Lemma~\ref{lemma:complete-decomposition},
\(G\) admits a complete decomposition.
Thus, let \(\B\) be a complete decomposition 
that maximizes \(\tau'(\B)\).
We claim that~\(\B\) is a path tracking decomposition.
Suppose, for a contradiction, that \(\B\) contains an element \(T\)
that contains a triangle.
By item (ii) of Definition~\ref{def:complete},
\(T\) is contained in an exceptional pair of \(\B\), 
say \(X=\{T_1,T_2\}\) of \(\B\).
Suppose \(T_1 = a_1b_1c_1d_1e_1\) and \(T_2 = a_2b_2c_2d_2e_2\),
where \(e_2 = b_2 = b_1\) and \(d_1 = d_2\).
For ease of notation, let \(b = b_1 = b_2\) and \(d = d_1 = d_2\).
By item~(i) of Definition~\ref{def:complete},
\(\prehang(c_1,\B) \geq 1\) and \(\prehang(c_2,\B) \geq 1\).
Thus, there is an element \(P_1\) containing a hanging edge at~\(c_1\),
and an element~\(P_2\) containing a hanging edge at~\(c_2\).
We claim that at least one between \(P_1\) and \(P_2\)
does not contain \(b\).
Indeed, suppose~\(P_1\) and \(P_2\) contain \(b\).
By item (iii) of Definition~\ref{def:complete},
\(b\) is an end-vertex of~\(P_1\) and \(P_2\).
But \(b\) is an end-vertex of \(T_2\).
Therefore, \(\B(b) \geq 3\), a contradiction to~\(\B\) being a balanced
decomposition.

Thus, we may assume that \(P_i\)
does not contain \(b\), and put \(j=3-i\).
Without loss of generality, let \(P_i = a_3b_3c_3d_3e_3\), where \(b_3 = c_i\)
and \(a_3b_3\) is a hanging edge at \(c_i\) 
(otherwise we have \(P_i^- = a_3b_3c_3d_3e_3\), where \(b_3 = c_i\)
and \(a_3b_3\) is a hanging edge at \(c_i\)).
Put \(P' = bb_3c_3d_3e_3\), and note that since \(P_i\) 
does not contain~\(b\), 
\(P'\) contains a triangle only if \(P_i\) 
contains the triangle \(b_3c_3d_3e_3\).
Now we show how to decompose the subgraph of \(G\)
induced by \(E(T_1) + E(T_2) -c_ib + c_ia_3\) into paths of length \(4\).
We divide the proof into two cases, whether \(a_2 = e_1\) or \(a_2\neq e_1\).
If \(a_2=e_1\),
then since~\(\B\) is balanced, we have \(a_2\neq a_3\).
Put \(T_1' = a_3c_idba_2\) and \(T_2' = a_1bc_jde_1\) (see Figure~\ref{fig:fixing-exceptional}). 
Now, suppose \(a_2\neq e_1\).
Since \(\B\) is balanced, we have \(a_3\neq a_1\) or \(a_3\neq a_2\).
Say \(a_3\neq a_k\), where \(k\in\{1,2\}\), and put \(l=3-k\).
Note that, since \(T_1\) is a path, we have \(a_1\neq e_1\), hence \(a_k,a_l\neq e_1\).
We put \(T_1' = a_3c_idba_k\) and \(T_2' = a_lbc_jde_1\).

\begin{figure}[h]\centering
	\centering
	\begin{subfigure}[b]{.5\linewidth}
	\centering
	\scalebox{.9}{\begin{tikzpicture}[scale = 1]


\node (v) [black vertex, label=left:$a_1$] at (-2,1) {};
\node (a) [black vertex, label=below:$b$] at (-1,1) {};
\node (beta) [black vertex, label=below:$c_i$] at (0,0) {};
\node (b) [black vertex, label=right:$d$] at (1,1) {};

\node (alpha) [black vertex, label=above:$c_j$] at (0,2) {};

\node () [] at (2,1) {};

\node (y) [black vertex, label=right:$a_3$] at (1,0) {};

\node (top) [black vertex] at (0,3) {};
\node () [] at (0,3.4) {$a_2= e_1$};

\draw [line width=1.3pt,color=black,->] (beta) -- (a);
\draw [line width=1.3pt,color=black,->] (beta) -- (b);
\draw [line width=1.3pt,color=black,->] (a) -- (v);
\draw [line width=1.3pt,color=black,->] (b) -- (a);

\draw [line width=1.3pt,color=black,->] (alpha) -- (a);
\draw [line width=1.3pt,color=black,->] (alpha) -- (b);
\draw [line width=1.3pt,color=black,->] (a) to [bend left] (top);
\draw [line width=1.3pt,color=black,->] (b) to [bend right] (top);
\draw [line width=1.3pt,color=black,->] (beta) -- (y);

 \node (l1) [] at (-2,0){};
 \node (l2) [] at (-2,3){};
  
 \draw[E edge][decorate,decoration={snake,segment length=7mm}] 
				(beta) -- (l1);

\node () [] at (-1.2, -0.35) {$P_i$};

\end{tikzpicture}}
	\end{subfigure}%
	\begin{subfigure}[b]{.5\linewidth}
	\centering
	\scalebox{.9}{\begin{tikzpicture}[scale = 1]


\node (v) [black vertex, label=left:$a_1$] at (-2,1) {};
\node (a) [black vertex, label=below:$b$] at (-1,1) {};
\node (beta) [black vertex, label=below:$c_i$] at (0,0) {};
\node (b) [black vertex, label=right:$d$] at (1,1) {};

\node (alpha) [black vertex, label=above:$c_j$] at (0,2) {};

\node () [] at (2,1) {};

\node (y) [black vertex, label=right:$a_3$] at (1,0) {};

\node (top) [black vertex] at (0,3) {};
\node () [] at (0,3.4) {$a_2= e_1$};

\draw [line width=1.3pt,color=black,->,color=blue] (beta) -- (a);
\draw [line width=1.3pt,color=black,->,color=red] (beta) -- (b);
\draw [line width=1.3pt,color=black,->,color=green] (a) -- (v);
\draw [line width=1.3pt,color=black,->,color=red] (b) -- (a);

\draw [line width=1.3pt,color=black,->,color=green] (alpha) -- (a);
\draw [line width=1.3pt,color=black,->,color=green] (alpha) -- (b);
\draw [line width=1.3pt,color=black,->,color=red] (a) to [bend left] (top);
\draw [line width=1.3pt,color=black,->,color=green] (b) to [bend right] (top);
\draw [line width=1.3pt,color=black,->,color=red] (beta) -- (y);

 \node (l1) [] at (-2,0){};
 \node (l2) [] at (-2,3){};
  
 \draw[E edge][decorate,decoration={snake,segment length=7mm},color=blue] 
				(beta) -- (l1);

\node () [] at (-1.2, -0.35) {$P_i$};

\end{tikzpicture}}
	\end{subfigure}%
	\caption{Decomposing $T_1+T_2+P_i$ when $a_2 = e_1$.}\vspace{-.5cm}
	\label{fig:fixing-exceptional}	
\end{figure}
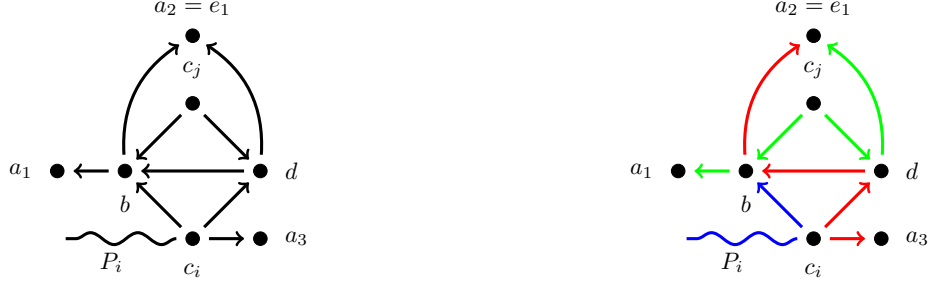

It is clear that in the above two cases \(T_1'\) and \(T_2'\) are paths of length \(4\).
Let \(\B' = \B - P_i + P' - T_1 + T_1' - T_2 + T_2'\)
(note that we supposed that \(P_1,T_1,T_2\in\B\), otherwise, we use \(P_i^-,T_1^-,T_2^-\), 
conveniently).
We have \(\tau'(\B') \geq \tau'(\B) + 1\).
Now, we verify that \(\B'\) is a complete decomposition.
Clearly, \(\B'\) is balanced, since \(\B'(v) = 2\) for every vertex \(v\) of \(G\).

\smallskip
\noindent\textbf{(i)}
It is clear that the only edge that is a hanging edge in \(\B\) but not in \(\B'\) 
is \(db\).
But \(d_1e_1\) is a hanging edge at~\(d\),
thus \(\prehang(d,\B')\geq 1\).
Moreover, if \(v\neq d\), and \(e\) is a hanging edge at \(v\) in \(\B\),
then \(e\) is a hanging edge at~\(v\) in~\(\B'\),
hence \(\prehang(v,\B') \geq \prehang(v,\B) \geq 1\)
for every vertex \(v\neq d\).

\smallskip
\noindent\textbf{(ii)}
Let \(Q\) be an element of \(\B'\) that contains a triangle.
If \(Q\notin \{P',T_1',T_2'\}\), 
then \(Q\) is an element of \(\B\cup\B^-\), 
hence, by item (ii) of Definition~\ref{def:complete}, 
\(Q\) is contained in at least one exceptional pair of \(\B\), say \(X_R = \{Q,W\}\)
\big(otherwise~\(Q^-\) is contained in an exceptional pair \(X_R = \{Q^-,R\}\)\big).
As noted before,~\(X_R\) is the only exceptional pair of \(\B\) containing \(W\).
Therefore, \(W\neq T_1\).
If \(W=P_i\), then \(\{Q,P'\}\) is an exceptional pair of \(\B'\).
Now, suppose \(Q\in \{P',T_1',T_2'\}\). 
Thus, \(Q=P'\), because~\(T_1'\) and~\(T_2'\) do not contain triangles.
As noted before, 
\(P'\) contains a triangle only if \(P_i\) contains the triangle \(b_3c_3d_3e_3\).
Note that \(\{P_i,T_j\}\) is not an exceptional pair of~\(\B\).
Thus, there is a path \(W\) in \(\B\) such that \(X_R=\{P_i,W\}\)
is an exceptional pair of \(\B\).
Therefore, \(\{P',W\}\) is an exceptional pair of~\(\B'\).

\smallskip	
\noindent\textbf{(iii)}
Let \(X' = \{Q,W\}\) be an exceptional pair of \(\B'\) with central vertex \(z\),
and let \(R\) be an element of \(\B'\) that contains a hanging edge at a connection vertex, 
say \(c\), of~\(X'\) and contains \(z\).
It is clear that \(z\neq c\).
Since \(\B\) is complete, if \(Q,W,R\notin\{P',T_1',T_2'\}\), then \(z\) is an end-vertex of~\(R\).
Now, we claim that \(Q,W\neq T_1',T_2'\).
Indeed, since the pivotal edge of \(T_1'\) is in~\(E(T_2')\), if \(Q = T_1'\), then \(W = T_2'\),
but \(T_1'\) and \(T_2'\) are paths, hence \(\{T_1',T_2'\}\) is not an exceptional pair of \(\B'\).
Analogously, we have \(Q \neq T_2'\).
Therefore, \(P'\) is the only element in \(\{P',T_1',T_2'\}\) that is contained in an exceptional pair of \(\B'\).
In what follows we first study the cases where \(R\in\{P',T_1',T_2'\}\),
and then the cases where \(R\notin\{P',T_1',T_2'\}\).

Suppose \(R = P'\) and recall that \(P' = bb_3c_3d_3e_3\).
In this case, we have \(Q,W\in\B\) and \(c\in\{b_3,d_3\}\).
Thus, \(P_i\) contains (in \(\B\)) a hanging edge at \(c\).
Since \(\B\) is complete, either \(z\notin V(P_i)\) or \(z\) is an end-vertex of~\(P_i\).
The only vertex in \(V(P')-V(P_i)\)
is \(b\), which is the central vertex of \(\{T_1,T_2\}\).
Thus, if \(z\notin V(P_i)\), then \(z = b\), 
hence \(z\) is the central vertex of at least two exceptional pairs of~\(\B\), a contradiction.
Therefore, we may assume that~\(z\) is an end-vertex of \(P_i\), i.e., \(z \in \{a_3,e_3\}\).
Suppose \(z=a_3\).
By the construction of~\(P'\), we have \(a_3\in V(P')\)
if and only if \(P_i\) contains the triangle \(a_3b_3c_3d_3\).
In this case, \(z = a_3 = d_3\), hence \(c\neq d_3\) because \(z\neq c\).
Thus \(c = b_3\), 
hence \(b_3a_3\in E(P_i)\) and \(cz\in E(Q)\cup E(W)\) are parallel edges, a contradiction to \(G\) being simple.
Thus, \(z=e_3\) is an end-vertex of \(P'\).

Now, we study the cases where \(R = T_1'\) or \hbox{\(R=T_2'\).}
First, note that since \(\big(E(Q)\cup E(W)\big)\cap\big(E(T_1)\cup E(T_2)\big) = \emptyset\),
\(d_{E(Q)\cup E(W)}(z) = d_{E(T_1)\cup E(T_2)}(b) = 5\), and \(d_{E(T_1)\cup E(T_2)}(d) = 4\),
we have \(z\neq b,d\), otherwise \(d(z) \geq 9\), a contradiction to \(G\) being \(8\)-regular.
Suppose that \(R = T_1'\).
In this case, we have \(R = a_3c_idba_k\), where \(k\in\{1,2\}\),
hence \(c=c_i\) or \(c=b\).
Since \(z\neq b,d\), if \(c = c_i\), then \(z\in \{a_3,a_k\}\).
If \(c=b\) and \(z=c_i\), 
then \(bc_i\in E(T_2')\) and \(cz\in E(Q)\cup E(W)\) are parallel edges, 
a contradiction.
Thus, if \(c=b\), then \(z\in \{a_3,a_k\}\).
In both cases we have \(z\in \{a_3,a_k\}\), which are precisely the end-vertices of \(R\).
Suppose that \(R = T_2'\).
In this case, we have \(R = a_lbc_jde_1\), where \(l\in\{1,2\}\).
Thus, \(c=b\) or \(c=d\).
In any of this cases, \(z\neq c_j\), 
otherwise \(cc_j\in E(T_2')\) and \(cz\in E(Q)\cup E(W)\) would be parallel edges, 
a contradiction.
Since \(z\neq b,d\), we have \(z\in \{a_l,e_1\}\), which are precisely the end-vertices of \(R\).

As noted before, at least one between \(Q,W,R\) must be in \(\{P',T_1',T_2'\}\).
We already studied the cases there \(R\in\{P',T_1',T_2'\}\).
Now it remains to prove the case where \(R\notin\{P',T_1',T_2'\}\).
Also, we know that \(T_1',T_2'\notin \{Q,W\}\).
Thus \(P'\in \{Q,W\}\) and \(R \neq T_1',T_2'\).
Suppose, without of generality, that \(Q = P'\).
In this case, we have \(W,R\notin\{P',T_1',T_2'\}\).
Since \(\{P',W\}\) is an exceptional pair in \(\B'\),
\(\{P_i,W\}\) is an exceptional pair in \(\B\),
because the middle edges, \(b_3c_3\) and \(c_3d_3\), of \(P_i\) and \(P'\) are the same.
Moreover, \(z\) is the central vertex of \(\{P_i,W\}\) 
and \(c\) is a connection vertex of \(\{P_i,W\}\).
Thus, since \(\B\) is complete and \(R\) contains \(z\), \(z\) is an end-vertex of \(R\).
Therefore, \(\B'\) satisfies item (iii) of Definition~\ref{def:complete}.

We conclude that \(\B'\) is a complete decomposition such that \(\tau'(\B') > \tau'(\B)\),
a contradiction to the maximality of \(\tau'(\B)\).
\end{proof}

\begin{corollary}
	If \(G\) is an \(8\)-regular graph, then \(G\) admits a balanced \(P_4\)-decomposition.
\end{corollary}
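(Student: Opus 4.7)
The plan is to derive the corollary directly from the preceding theorem, treating it as a translation between the tracking language and the ordinary language of subgraph decompositions. By that theorem, every $8$-regular graph $G$ admits a balanced $4$-tracking decomposition $\B$. Inspecting the conclusion of the theorem's proof one further sees that the maximizer of $\tau'(\B)$ among complete decompositions contains no element inducing a triangle, so in fact every tracking $B \in \B$ induces a path of length $4$; that is, $\B$ is a balanced path tracking decomposition whose every tracking is a $4$-tracking.

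Given such a $\B$, I would set $\bar\B = \{\bar B : B \in \B\}$. By the definition recorded in the notation subsection, each $\bar B$ is the trail with vertex set $V(B)$ and edge set $E(B)$, so since $B$ induces a path of length $4$ we have $\bar B \simeq P_4$. Because the edge sets $E(B)$ partition $E(G)$ (as $\B$ is a tracking decomposition), the family $\bar \B$ is a $P_4$-decomposition of $G$.

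It remains to check that $\bar\B$ is balanced, and for this I would invoke directly the observation stated at the end of the notation subsection: ``if $\B$ is a balanced path tracking decomposition of $G$, then $\bar \B$ is a balanced path decomposition of $G$.'' Concretely, for every vertex $v$ of $G$ the quantity $\B(v) = 2$ equals the number of elements of $\bar\B$ having $v$ as an end-vertex, so every vertex is an end-vertex of exactly two paths of $\bar\B$.

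The main obstacle, such as it is, lies entirely in the preceding theorem together with its supporting lemmas (the construction of the good Eulerian orientation in Lemma~\ref{lemma:good-orientation}, the extraction of an exceptional decomposition via Lemma~\ref{lemma:path+exceptional}, and the switching argument of the theorem itself). The corollary is then immediate; there is no additional combinatorial content, only the passage from trackings $B$ to their underlying copies $\bar B$ of $P_4$.
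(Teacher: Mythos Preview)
Your proposal is correct and matches the paper's approach: the paper gives no explicit proof of the corollary, treating it as immediate from the theorem (whose proof actually establishes that the maximizing complete decomposition is a path tracking decomposition) together with the observation in the notation subsection that a balanced path tracking decomposition yields a balanced path decomposition via $\B \mapsto \bar\B$. Your only extra step is making explicit what the paper leaves implicit, namely that the theorem's proof delivers a \emph{path} tracking decomposition rather than merely a $4$-tracking decomposition.
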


\section{Concluding remarks}

In this paper we prove Conjecture~\ref{conj:koulonc} for paths of length \(4\).
This result improves the previous result~\cite{KoLo99} that, for paths of length \(4\),
states that triangle-free \(8\)-regular graphs admit balanced \(P_4\)-decompositions.
We believe that the technique presented here can be modified to improve the girth condition
for \(\ell >4\), or to prove Conjecture~\ref{conj:haggkvist} for trees of diameter \(4\).

\bibliography{bibliografia}

\end{document}